\documentclass[english]{amsart}

\usepackage[utf8]{inputenc}
\usepackage[english]{babel}
\usepackage[margin=1.15in]{geometry}

\usepackage{amsmath,amssymb,mathrsfs}
\usepackage[numbers]{natbib}

\numberwithin{equation}{section}

\newtheorem{theorem}{Theorem}[section]
\newtheorem{proposition}[theorem]{Proposition}
\newtheorem{lemma}[theorem]{Lemma}
\newtheorem{corollary}[theorem]{Corollary}

\theoremstyle{definition}

\theoremstyle{remark}
\newtheorem{remark}[theorem]{Remark}

\newcommand{\G}{\mathbb G}
\newcommand{\W}{\mathbf W}
\newcommand{\Mcal}{\mathbf M}
\newcommand{\supp}{\operatorname{supp}}
\newcommand{\diam}{\operatorname{diam}}
\newcommand{\dd}{\,\mathrm d}

\usepackage[
  colorlinks=true,
  linkcolor=blue,
  citecolor=blue,
  urlcolor=blue
]{hyperref}

\date{}

\begin{document}
\title{Quasilinear Equations with Exponential Reactions and Measure Data
on Carnot Groups}
\author{Shiguang Ma \, Zijian Wang}
\address[Shiguang Ma]
	{School of Mathematical Science and LPMC, Nankai University, Tianjin 300071, People's Republic of China.}
	\email{msgdyx8741@nankai.edu.cn}
	
	\address[Zijian Wang]
	{Chern Institute of Mathematics and LPMC, Nankai University, Tianjin 300071, People's Republic of China.}
	\email{wzj@mail.nankai.edu.cn}
\begin{abstract}
\setlength{\emergencystretch}{1em}
In this paper, we establish existence and pointwise Wolff potential bounds for nonnegative solutions of $-\Delta_{\mathbb{G},p}u=H_l(\alpha u^\beta)+\mu$ on Carnot groups, where $H_l(t)=e^t-\sum_{j=0}^{l-1}t^j/j!$ and $\mu$ is a nonnegative Radon measure. Under suitable conditions on the reaction parameters, we treat arbitrary open sets in the subcritical range $1<p<Q$ under a maximal potential condition, and bounded domains in the critical case $p=Q$ under a small total mass condition.
\end{abstract}
\maketitle

\section{Introduction}

\subsection{Carnot groups and the horizontal p-sub-Laplacian}
We first recall the geometric framework; standard references are
\cite{Folland1975,FollandStein1982,BLU}. Let $\G$ be a Carnot group of step
$s$. Its Lie algebra has a stratification
\begin{equation}
 \mathfrak g=V_1\oplus\cdots\oplus V_s,
 \quad [V_1,V_j]=V_{j+1}\ (1\le j<s),\quad [V_1,V_s]=\{0\}.
 \label{eq:stratification}
\end{equation}
Put $m=\dim V_1$ and choose a basis $X_1,\dots,X_m$ of $V_1$, identified
with the corresponding left-invariant horizontal vector fields. In
exponential coordinates, $\G$ is identified with $\mathbb R^N$, and the
automorphic dilations are
\begin{equation}
 \delta_r\bigl(x^{(1)},\dots,x^{(s)}\bigr)
 =\bigl(rx^{(1)},r^2x^{(2)},\dots,r^sx^{(s)}\bigr),\quad r>0,
 \label{eq:dilation}
\end{equation}
where $x^{(j)}\in V_j$. The homogeneous dimension is
\begin{equation}
 Q=\sum_{j=1}^s j\dim V_j.
 \label{eq:homogeneous-dimension}
\end{equation}

The Carnot--Carath\'eodory distance $d$ is defined by declaring the fields
$X_1,\dots,X_m$ orthonormal. It is left invariant and homogeneous:
\begin{equation}
 d(g\circ x,g\circ y)=d(x,y),\quad
 d(\delta_rx,\delta_ry)=r\,d(x,y).
 \label{eq:cc-properties}
\end{equation}
Lebesgue measure in exponential coordinates is Haar measure. In particular,
if $B_r(x)=\{y:d(x,y)<r\}$, then
\begin{equation}
 |B_r(x)|=|B_1(e)|r^Q,\quad x\in\G,\ r>0.
 \label{eq:ball-volume}
\end{equation}
Thus $(\G,d,dx)$ is a geodesic doubling metric measure space. We fix this
structure throughout the paper; structural constants may depend on it.

For a smooth function $u$, write
\[
 Xu=(X_1u,\dots,X_mu),\quad
 |Xu|=\left(\sum_{j=1}^m|X_ju|^2\right)^{1/2},
\]
and define the horizontal $p$-sub-Laplacian by
\begin{equation}
 \Delta_{\G,p}u=\sum_{j=1}^mX_j\bigl(|Xu|^{p-2}X_ju\bigr),
 \quad 1<p<\infty.
 \label{eq:plaplacian}
\end{equation}

\subsection{Research background and main results}

Classical existence results for quasilinear equations with measure data
include the work of Boccardo--Gallou\"et and
Boccardo--Gallou\"et--Orsina
\cite{BoccardoGallouet1989,BoccardoGallouetOrsina1996}; see also the
monograph \cite{MarcusVeron2014}. Pointwise estimates in terms of nonlinear
potentials play an important role in this theory; see
\cite{KTM-ASNSP,KM2,AdamsHedberg1996,KuusiMingione2014}.

For equations with power reactions, Phuc and Verbitsky obtained necessary
and sufficient conditions for solvability in the Euclidean setting
\cite{PhucVerbitsky2008} and on Carnot groups \cite{PVTAMS}. Their approach
relates the measure data to nonlinear potential estimates and capacitary
conditions. The Carnot group theory in \cite{PVTAMS} also provides
existence and Wolff potential bounds for the measure data Dirichlet
problem, together with a regularized construction of monotone iterates.
These results and the weak continuity theorem of Trudinger and Wang
\cite{TW} provide the measure data framework used below.

Exponential reactions in the Euclidean subcritical range $1<p<N$ were
studied by Nguyen and V\'eron \cite{NGARMA2014}, with existence conditions
expressed through logarithmically corrected maximal potentials. In the
critical case $p=N$, Ma and Wang \cite{MaWang2026} established existence
for the Dirichlet problem with truncated exponential reaction on bounded
domains under a smallness condition on the total mass of the measure
data. Thus, in their critical result, no maximal potential smallness
condition is required. Related endpoint exponential estimates include
the Brezis--Merle estimate \cite{Brezis-Merle} and the Moser--Trudinger
inequality on Carnot groups proved by Balogh, Manfredi, and Tyson
\cite{BaloghManfrediTyson2003}.

For $l\in\mathbb N^*$, $\alpha>0$, and $\beta\geq1$, put
\[
 H_l(t)=e^t-\sum_{j=0}^{l-1}\frac{t^j}{j!},\quad
 P_{l,\alpha,\beta}(t)=H_l(\alpha t^\beta),\quad t\geq0.
\]
For a nonnegative Radon measure $\nu$ on $\G$ and $0<R\leq\infty$, define
\[
 \W_{1,p}^R[\nu](x)=\int_0^R
 \left(\frac{\nu(B_t(x))}{t^{Q-p}}\right)^{\!1/(p-1)}
 \frac{\dd t}{t}.
\]
For $\eta\geq0$, let
\[
 h_\eta(t)=
 \begin{cases}
 (-\log t)^{-\eta},&0<t<\frac12,\\
 (\log2)^{-\eta},&t\geq\frac12,
 \end{cases}
 \quad
 \Mcal_{p,R}^{\eta}[\nu](x)=
 \sup_{0<t\leq R}\frac{\nu(B_t(x))}{t^{Q-p}h_\eta(t)}.
\]
We write $\Mcal_p^\eta=\Mcal_{p,\infty}^\eta$.
When these potentials are applied to a finite measure defined on an open
subset of $\G$, the measure is extended by zero outside that set.
For a nonnegative density $f$, we write $\W_{1,p}^R[f]$ for
$\W_{1,p}^R[f\,dx]$.

We denote by $S^{1,p}(\Omega)$ the horizontal Sobolev space of functions
$v\in L^p(\Omega)$ whose distributional horizontal derivatives
$X_jv$ belong to $L^p(\Omega)$ for $1\leq j\leq m$.
The space $S_0^{1,p}(\Omega)$ is the closure of $C_c^\infty(\Omega)$ in
$S^{1,p}(\Omega)$; see \cite[Section~3]{PVTAMS}.
Let $\nu$ be a nonnegative Radon measure on an open set $\Omega$. We call
a nonnegative function $u$ a solution of $-\Delta_{\G,p}u=\nu$ with zero
boundary values if it is $p$-superharmonic, its truncations satisfy
\[
 \zeta(u\wedge k)\in S_0^{1,p}(\Omega)
 \quad
 \text{for every }k>0
 \text{ and every }\zeta\in C_c^\infty(\G),
\]
and, with $Xu$ denoting the generalized horizontal gradient
$Xu=\lim_{n\to\infty}X(u\wedge n)$ almost everywhere,
\[
 \int_\Omega |Xu|^{p-2}Xu\cdot X\varphi\,dx
 =\int_\Omega\varphi\,d\nu
 \quad\text{for every }\varphi\in C_c^\infty(\Omega).
\]
For bounded $\Omega$, the truncation condition above is
equivalent to $u\wedge k\in S_0^{1,p}(\Omega)$ for every $k>0$.
For $\Omega=\G$, it is equivalent to
$u\wedge k\in S_{\mathrm{loc}}^{1,p}(\G)$ for every $k>0$.Throughout the paper, $C$ denotes a positive constant that may change from line to line. 

The first theorem applies to arbitrary open subsets of $\G$, including
$\G$ itself. 
\begin{theorem}\label{thm:subcritical}
 Let $1<p<Q$, $\alpha>0$, $\beta\geq1$, and
 \[
 l\beta>\frac{Q(p-1)}{Q-p},\quad
 \eta=\frac{(p-1)(\beta-1)}{\beta}.
 \]
 Let $\Omega\subset\G$ be any nonempty open set and let $\mu$ be a
 nonnegative finite Radon measure with compact support
 $K_\mu\Subset\Omega$.
 Choose a ball $B_R(x_0)$ such that $K_\mu\subset B_R(x_0)$.
 There exist two constants
 \[
 b=b(\G,p,l,\alpha,\beta,R)>0,
 \quad
 \varepsilon_*=\varepsilon_*(\G,p,l,\alpha,\beta,R,C_*)>0,
 \]
 such that if
 \begin{equation}
\|\Mcal_p^\eta[\mu]\|_{L^\infty(\G)}\leq\varepsilon_*
 \label{eq:sub-small}
 \end{equation}
 then the problem
 \begin{equation}
 -\Delta_{\G,p}u=P_{l,\alpha,\beta}(u)+\mu
 \quad\text{in }\Omega,
 \label{eq:subproblem}
 \end{equation}
 admits a nonnegative solution $u$ in the sense defined above. With $\omega=\mu+b\chi_{B_{2R}(x_0)}dx$, this solution satisfies
 \begin{equation}
P_{l,\alpha,\beta}\bigl(2c_pC_*\W_{1,p}^\infty[\omega]\bigr)
 \in L^1(\G),\quad
 0\leq u\leq2c_pC_*\W_{1,p}^\infty[\omega]
 \quad\text{in }\Omega,
 \label{eq:subbound}
 \end{equation}
 where $c_p=1\vee2^{(2-p)/(p-1)}$ and
 $C_*=C_*(\G,p)$ is the constant in the measure data Wolff potential
 estimate. If $\Omega=\G$, then $\inf_{\G}u=0$.
\end{theorem}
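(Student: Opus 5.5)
We follow the Nguyen--V\'eron scheme for exponential reactions, transplanted to $\G$ with the measure-data Wolff potential estimate of \cite{PVTAMS}: if $v\ge0$ solves $-\Delta_{\G,p}v=\nu$ with zero boundary values in $\Omega$, then $v\le C_*\W_{1,p}^\infty[\nu]$. The plan has four steps: a potential-theoretic key estimate, an a priori invariance, monotone iteration, and passage to the limit via \cite{TW}.

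\textbf{Step 1 (key potential estimate).} The central claim is an inequality of the form
\[
 \W_{1,p}^\infty\bigl[P_{l,\alpha,\beta}(\lambda\W_{1,p}^\infty[\omega])\bigr]\le C\,\varepsilon^{\gamma}\,\W_{1,p}^\infty[\omega]
\]
for some $\gamma>0$, valid whenever $\|\Mcal_p^\eta[\omega]\|_\infty\le\varepsilon$. Here $\omega=\mu+b\chi_{B_{2R}(x_0)}dx$, and $b$ is chosen so that the added term controls the tail of $\W_{1,p}^\infty[\omega]$. Note that $\Mcal_p^\eta$ of $b\chi_{B_{2R}}dx$ is of order $b$ times a constant depending on $R$, so $b$ must be small relative to $\varepsilon_*$, or the roles are arranged accordingly.

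To prove the claim, expand $H_l(\alpha t^\beta)=\sum_{j\ge l}\alpha^j t^{\beta j}/j!$ and estimate each power $(\W_{1,p}[\omega])^{\beta j}$. The small-scale part $\W_{1,p}^{1/2}$ is handled by the Euclidean-type lemma in the style of Nguyen--V\'eron. The condition $\Mcal_p^\eta[\omega]\le\varepsilon$ with $\eta=(p-1)(\beta-1)/\beta$ gives exponential integrability: for a fixed ball $B$,
\[
 \int_B \exp\bigl(c\,\varepsilon^{-\beta/(p-1)}(\W_{1,p}^{r}[\omega])^\beta\bigr)\,dx\le C|B|,
\]
through a Hedberg/good-$\lambda$ decomposition of the Wolff integral on dyadic scales on the doubling space $(\G,d,dx)$. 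Combining this with $\nu(B_t)\lesssim \omega(B_{2t})\cdot(\dots)$ bounds yields $\W_{1,p}[P(\lambda\W[\omega])]\le C\W[\omega]$ locally.

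The large-scale part uses $l\beta>Q(p-1)/(Q-p)$. Since $\omega$ is compactly supported, $\W_{1,p}^\infty[\omega](x)\lesssim \|\omega\|^{1/(p-1)}|x|^{-(Q-p)/(p-1)}$ far away. Hence $P(\lambda\W)\lesssim |x|^{-l\beta(Q-p)/(p-1)}$, and this is integrable at infinity precisely because $l\beta(Q-p)/(p-1)>Q$. The same decay also gives the membership $P(2c_pC_*\W[\omega])\in L^1(\G)$ claimed in \eqref{eq:subbound}. Comparing with the decay of $\W[\chi_{B_{2R}}]$ shows the Wolff potential of the tail is dominated by $b^{-1/(p-1)}\W[\omega]$ times a small factor. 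I expect Step 1, namely the exponential integrability with the logarithmic weight $h_\eta$ on $\G$, to be the main obstacle.

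\textbf{Step 2 (invariance).} Define $U=2c_pC_*\W_{1,p}^\infty[\omega]$. From the quasi-triangle inequality $\W[\nu_1+\nu_2]\le c_p(\W[\nu_1]+\W[\nu_2])$ we get, for $0\le v\le U$,
\[
 C_*\W\bigl[P(v)+\mu\bigr]\le c_pC_*\bigl(\W[P(U)]+\W[\omega]\bigr)\le U,
\]
provided $\varepsilon_*$ is small enough that Step 1 gives $\W[P(U)]\le\W[\omega]$.

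\textbf{Step 3 (iteration).} Take $u_0=0$ and let $u_{k+1}$ solve $-\Delta_{\G,p}u_{k+1}=P(u_k)+\mu$ in $\Omega$ with zero boundary values. For general unbounded $\Omega$, first solve in $\Omega\cap B_n$ and then let $n\to\infty$, using the regularized monotone construction of \cite{PVTAMS} so that comparison applies and $u_k$ is nondecreasing. By Step 2 every iterate satisfies $u_k\le U$.

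\textbf{Step 4 (limit).} Set $u=\lim_k u_k$. The bound $u_k\le U$ together with $P(U)\in L^1$ gives $P(u_k)\to P(u)$ in $L^1$ by dominated convergence. The weak continuity theorem of Trudinger--Wang \cite{TW} then passes to the limit in the equation, and the truncation conditions are preserved. Finally, if $\Omega=\G$, then $\inf_\G u=0$ follows from $u\le U\to0$ at infinity.
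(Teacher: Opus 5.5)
Your architecture is the paper's: a good-$\lambda$ inequality on Whitney balls, exponential integrability of $(\W_{1,p}^\infty)^\beta$ under a bound on $\Mcal_p^\eta$, an absorption inequality with a compactly supported background, monotone iteration through \eqref{eq:Wsubadditive}, and Trudinger--Wang weak continuity in the limit. Steps 2--4 are fine in substance. Iterating linear problems on $\Omega$ by exhaustion at each step, instead of solving the nonlinear problem on bounded $\Omega_j$ (Lemma~\ref{lem:iteration}) and exhausting once, is a harmless reordering. You do still owe the truncation condition; the paper gets it from the uniform bound $\int|X(u_j\wedge k)|^p\,dx\le k\bigl(\mu(\G)+\|P_{l,\alpha,\beta}(U)\|_{L^1(\G)}\bigr)$ and weak compactness in $S_0^{1,p}(\Omega)$.

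The genuine gap is the local half of Step 1. Write $f=P_{l,\alpha,\beta}(U)$. You never explain how exponential integrability gives $\W_{1,p}^\infty[f]\le\W_{1,p}^\infty[\omega]$ on compact sets. The comparison you gesture at, read as a ball-by-ball bound $\int_{B_t(x)}f\,dx\le C\,\omega(B_{2t}(x))$, is false at small scales. Since $\eta/(p-1)<1$, take $\mu$ spread over separated dyadic annuli around a point $z$ with $\mu(B_r(z))\approx\varepsilon_*r^{Q-p}h_\eta(r)$. Then $\Mcal_p^\eta[\mu]\lesssim\varepsilon_*$, but $\W_{1,p}^\infty[\mu]$ is arbitrarily large at points in the gaps, where $\mu$ vanishes nearby. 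At such points $f$ is arbitrarily large, while $\omega(B_{2t}(x))=b|B_{2t}(x)|$ for small $t$. Expanding $H_l$ into powers does not supply the comparison either.

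The missing idea is to compare at the level of potentials, using the background locally and not only in the tail. The paper sets $b=\varepsilon_*=\tau$ and $\sigma=\tau^{-1}\mu+\chi_{B_{2R}(x_0)}dx$. Then $\|\Mcal_p^\eta[\sigma]\|_\infty\le1+C_2$ and $|B_{2R}(x_0)|\le\sigma(\G)\le M_0$, uniformly in $\mu$. Take $q=2Q/p>Q/p$ and $\delta=\delta_0/q$. Corollary~\ref{cor:localexp} together with the tail decay gives $g=H_l(\delta[\W_{1,p}^\infty[\sigma]]^\beta)\in L^1(\G)\cap L^q(\G)$ with uniform bounds. H\"older's inequality on small balls and the $L^1$ bound on large balls give $\W_{1,p}^\infty[g]\le C$. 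The pointwise decay of $g$ upgrades this to $\W_{1,p}^\infty[g](x)\le C\rho^{-(Q-p)/(p-1)}$ for large $\rho=d(x,x_0)$. The background gives $\W_{1,p}^\infty[\sigma](x)\ge c(\rho+2R)^{-(Q-p)/(p-1)}$ everywhere, hence $\W_{1,p}^\infty[g]\le C\W_{1,p}^\infty[\sigma]$ on all of $\G$.

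Smallness then comes from $H_l(\theta t)\le\theta^lH_l(t)$ for $0\le\theta\le1$ and the homogeneity $\W_{1,p}^\infty[\tau\sigma]=\tau^{1/(p-1)}\W_{1,p}^\infty[\sigma]$. Together these produce the factor $\tau^{(l\beta-(p-1))/(p-1)^2}$. This also settles the calibration of $b$ you left open, and the constraint that matters is a lower bound on $b$, not $b\lesssim\varepsilon_*$. The local lower bound for $\W_{1,p}^\infty[\omega]$ is of order $b^{1/(p-1)}$, while $\W_{1,p}^\infty[f]$ is at most of order $(\varepsilon_*+b)^{l\beta/(p-1)^2}$. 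So $b$ must be small but not small compared with $\varepsilon_*^{l\beta/(p-1)}$, and $b=\varepsilon_*$ works precisely because $l\beta>p-1$.
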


The second theorem is a Carnot group analogue of the Euclidean critical
$N$-Laplacian existence result of Ma and Wang \cite{MaWang2026}.
Here the reaction is $H_l(\alpha u)$, corresponding to $\beta=1$, and the
smallness condition involves only the total mass of the measure data.

\begin{theorem}\label{thm:critical}
 Assume that $Q>1$. Let $\Omega\Subset\G$ be a bounded open set, let $\alpha>0$, and let
 $l\in\mathbb N^*$ satisfy $l>Q-1$. There exists
$\varepsilon_0=\varepsilon_0(\G,Q,l,\alpha,\diam\Omega)>0$ such that
 every nonnegative finite Radon measure $\mu$ in $\Omega$ satisfying
 \begin{equation}
 \mu(\Omega)\leq\varepsilon_0
 \label{eq:critical-small}
 \end{equation}
 admits a nonnegative solution of
 \begin{equation}
 \begin{cases}
 -\Delta_{\G,Q}u=H_l(\alpha u)+\mu&\text{in }\Omega,\\
 u=0&\text{on }\partial\Omega
 \end{cases}
 \label{eq:criticalproblem}
 \end{equation}
 in the sense defined above. More precisely, for a ball
 $B_{10D}(x_0)\supset\Omega$, where $D=\diam\Omega$, one can choose
 $b=b(\G,Q,l,\alpha,D)>0$ so that, for
 $\omega=\mu+b\chi_{B_{10D}(x_0)}dx$,
 \begin{equation}
 H_l\bigl(2\alpha C_*\W_{1,Q}^{2D}[\omega]\bigr)\in L^1(\Omega),
 \quad
 0\leq u\leq2C_*\W_{1,Q}^{2D}[\omega]\quad\text{in }\Omega.
 \label{eq:criticalbound}
 \end{equation}
\end{theorem}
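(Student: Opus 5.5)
We prove Theorem~\ref{thm:critical} by a monotone iteration controlled by a Wolff potential supersolution, as in \cite{MaWang2026} and \cite{PVTAMS}. The basic tool is the measure data Wolff estimate on Carnot groups: for a nonnegative finite measure $\nu$ on bounded $\Omega$, the Dirichlet problem $-\Delta_{\G,Q}v=\nu$, $v=0$ on $\partial\Omega$, has a nonnegative solution with $v\le C_*\W_{1,Q}^{2D}[\nu]$ in $\Omega$, where $D=\diam\Omega$. We also use the comparison and monotone construction of regularized iterates from \cite{PVTAMS}, and the weak continuity theorem of Trudinger--Wang \cite{TW} to pass to limits.

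\textbf{Iteration.} Set $u_0$ to be the solution with datum $\mu$. Given $u_k$, let $u_{k+1}$ be the solution with datum $H_l(\alpha u_k)\chi_\Omega+\mu$, constructed as an increasing limit of regularized problems so that $u_k\le u_{k+1}$. The induction hypothesis is
\[
u_k\le 2C_*\W:=2C_*\W_{1,Q}^{2D}[\omega]\quad\text{in }\Omega .
\]
Assuming it, $u_{k+1}\le C_*\W_{1,Q}^{2D}\bigl[H_l(2\alpha C_*\W)\chi_\Omega+\mu\bigr]$, so it suffices to prove
\[
\W_{1,Q}^{2D}\bigl[H_l(2\alpha C_*\W)\chi_\Omega+\mu\bigr]\le 2\W\quad\text{in }\Omega.
\]
For $p=Q$ the Wolff potential is $(Q-1)^{-1}$-homogeneous and subadditive up to a constant $c$, and $c=1$ when $Q\ge2$. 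So the key is to show that if $H_l(2\alpha C_*\W)\chi_\Omega\le b\chi_{B_{10D}(x_0)}$ pointwise, or at least in the measure sense on balls, then the displayed inequality follows with room to spare. A pointwise bound is impossible because $\W$ blows up at atoms of $\mu$. Instead we aim for the ball estimate
\[
\int_{B_t(x)\cap\Omega}H_l(2\alpha C_*\W)\,dy\le C\,b\,t^Q\quad\text{for } t\le 2D.
\]
This gives $\W_{1,Q}^{2D}[H_l(\cdot)\chi_\Omega]\le C b^{1/(Q-1)}\log$-type terms. Since $\W_{1,Q}^{2D}[b\chi_{B_{10D}}]\ge c\,b^{1/(Q-1)}\log 2$ on $\Omega$, the bound is absorbed into $\W$.

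\textbf{Main obstacle: the exponential integrability of the Wolff potential.} We split $\W\le \W_{1,Q}^{2D}[\mu]+\W_{1,Q}^{2D}[b]$. The second term is bounded by $C b^{1/(Q-1)}$, which is small once $b$ is small. For the first term we need a Brezis--Merle type estimate on the doubling space $\G$: if $\mu(\G)\le\varepsilon$, then
\[
\int_{B_t(x)}\exp\bigl(\lambda\,\W_{1,Q}^{2D}[\mu]\bigr)\,dy\le C t^Q
\quad\text{whenever }\lambda\varepsilon^{1/(Q-1)}\le c_0 .
\]
We would prove this by the standard dyadic decomposition $\W\lesssim\sum_j(\mu(B_{2^{-j}}(y))2^{jQ}2^{-jQ})^{1/(Q-1)}$. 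One then uses $t\mapsto H_l$ of a sum, Hölder's inequality, and the Jensen/convexity trick of \cite{MaWang2026}, writing $\W[\mu]=\varepsilon^{1/(Q-1)}\W[\mu/\varepsilon]$. Alternatively one can bound $\W$ by a Riesz-type potential and use a Moser--Trudinger bound \cite{BaloghManfrediTyson2003}.

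The condition $l>Q-1$ enters here. Near the ball scale, $H_l(s)\le Cs^l$ for small $s$, and the local piece $\W^t$ must be handled separately from the tail. The tail $\int_t^{2D}$ is at most $\varepsilon^{1/(Q-1)}\log(2D/t)$, which after exponentiation gives the factor $(2D/t)^{C\varepsilon^{1/(Q-1)}}$. On the other hand, the $s^l$ behaviour gives $\W_{1,Q}[b]^{l}\sim b^{l/(Q-1)}$, and $l/(Q-1)>1$ makes this superlinear in $b$, hence absorbable for small $b$. We then choose $b$ small, and afterwards $\varepsilon_0$ small depending on $b$ and $D$.

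\textbf{Passage to the limit.} Once the induction closes, $u_k\uparrow u\le 2C_*\W$. The functions $H_l(\alpha u_k)$ are dominated by $H_l(2\alpha C_*\W)\in L^1(\Omega)$, so monotone convergence gives convergence in $L^1$. By \cite{TW} the measures $-\Delta_{\G,Q}u_k$ converge weakly, and $u$ solves \eqref{eq:criticalproblem}. The truncation condition $u\wedge k\in S_0^{1,Q}(\Omega)$ is inherited from uniform energy bounds on truncations, obtained by testing with $u_k\wedge k$.
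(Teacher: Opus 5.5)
\textbf{There is a genuine gap at the step your absorption argument rests on.} Write $W=\W_{1,Q}^{2D}[\omega]$ and $f=H_l(2\alpha C_*W)\chi_\Omega$.

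Your target estimate is $\int_{B_t(x)\cap\Omega}f\,dy\le Cb\,t^Q$ for all $x$ and all $t\le 2D$. By Lebesgue differentiation this is equivalent to the pointwise bound $f\le C'b$ a.e.\ in $\Omega$. You correctly say that pointwise bound is impossible, so passing to ``the measure sense on balls'' does not escape the obstruction.

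Concretely, take $\mu=m\delta_{x_1}$ with $x_1\in\Omega$. Then $W(y)\ge m^{1/(Q-1)}\log(2D/d(y,x_1))$, so $\int_{B_t(x_1)}f\,dy$ is bounded below by a multiple of $t^{Q-2\alpha C_*m^{1/(Q-1)}}$ as $t\to0$, which is not $O(t^Q)$. The theorem must cover such $\mu$, since only $\mu(\Omega)$ is restricted.

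The same example refutes your Brezis--Merle claim $\int_{B_t(x)}\exp(\lambda\W_{1,Q}^{2D}[\mu])\,dy\le Ct^Q$ uniformly in $t$. The potential is truncated at $2D$ while the ball has radius $t$, and the factor $(2D/t)^{C\varepsilon^{1/(Q-1)}}$ you mention later is exactly what destroys the uniformity.

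Finally, even granting a bound $Cbt^Q$, you would get $\W_{1,Q}^{2D}[f]\le C(bD^Q)^{1/(Q-1)}$ against $W\ge c(bD^Q)^{1/(Q-1)}$. That is the same power of $b$ on both sides, so nothing is absorbed. The superlinearity from $l>Q-1$ is asserted in a separate paragraph but never fed into an actual bound for $\W_{1,Q}^{2D}[f]$. Also, $H_l(s)\le Cs^l$ for small $s$ is of no use where $W$ is large.

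\textbf{The missing idea} is to accept a \emph{subcritical} growth in $t$, which the Wolff integral at $p=Q$ still tolerates, and to extract the smallness as a power of the total mass. This is what the paper does, in these steps.
\begin{itemize}
\item Set $M=\omega(\G)$ and $V=W/M^{1/(Q-1)}$. The paper proves a Brezis--Merle estimate for probability measures. It bounds $\W_{1,Q}^2[\sigma]\le\frac{Q-1}{Q}+\frac1Q\log^+\bigl(2^Q\sup_{t>0}\sigma(B_t(x))t^{-Q}\bigr)$, uses the weak $(1,1)$ bound for this maximal function, and then rescales.
\item Applied at the fixed scale of $\Omega$, this gives $\int_\Omega e^{QV/2}\,dx\le CD^Q$.
\item Using $H_l(t)\le t^le^t/l!$ for all $t\ge0$, and $M$ small, this yields $\|f\|_{L^2(\Omega)}\le CD^{Q/2}M^{l/(Q-1)}$.
\item H\"older gives $\int_{B_t(x)}f\,dy\le Ct^{Q/2}\|f\|_{L^2}$, hence $\W_{1,Q}^{2D}[f]\le CD^{Q/(Q-1)}M^{l/(Q-1)^2}$ everywhere.
\item On $\Omega$ one has the lower bound $W\ge cM^{1/(Q-1)}$, from the $\mu$-mass at scales $D<t<2D$ together with the background.
\item Comparing the two gives the factor $M^{(l-Q+1)/(Q-1)^2}$, which is small exactly because $l>Q-1$. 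One then chooses $b$ and $\varepsilon_0$ so that $M$ stays below a fixed threshold.
\end{itemize}
Your tail observation, carried out correctly (Brezis--Merle at scale $t$ plus the factor $(2D/t)^{\lambda M^{1/(Q-1)}}$), would give a bound $CM^{l/(Q-1)}D^{\kappa}t^{Q-\kappa}$ with small $\kappa>0$. That would serve equally well in place of $t^{Q/2}$, but the proposal states the wrong exponent and never makes this connection.

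The rest of your proposal matches the paper and is fine: the iteration scheme, the reduction via $c_Q=1$ subadditivity to $\W_{1,Q}^{2D}[f]\le W$, and the passage to the limit.
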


\begin{remark}
 The background measures in \eqref{eq:subbound} and
 \eqref{eq:criticalbound} are finite and compactly supported.
 For the whole space $L^1$ assertion in \eqref{eq:subbound}, the background
 cannot be replaced by a positive constant density on all of $\G$,
 even at a fixed finite truncation scale, its Wolff potential is a
 positive constant, and the associated reaction is not integrable on
 $\G$.
\end{remark}
Our results extend the existence theory for exponential reaction equations in \cite{NGARMA2014,MaWang2026} to Carnot groups of
arbitrary step. The main contribution is to establish nonlinear potential estimates that control the exponential reaction in this setting. In the subcritical case, these estimates provide global integrability of the reaction and bounds independent of the approximating domains. This allows us to treat arbitrary open sets, including unbounded proper subsets, beyond the bounded domain and whole space settings considered in \cite{NGARMA2014}.
\section{Potential theory of the p-sub-Laplacian}

\subsection{Whitney decomposition}

For a Borel set $E$, $\nu\restriction E$ denotes the restriction of a
measure $\nu$ to $E$.  The elementary inequality
\begin{equation}
 \W_{1,p}^R[\nu_1+\nu_2]
 \le c_p\bigl(\W_{1,p}^R[\nu_1]+\W_{1,p}^R[\nu_2]\bigr),
 \quad c_p=1\vee2^{(2-p)/(p-1)},
 \label{eq:Wsubadditive}
\end{equation}
will be used repeatedly.

We recall some facts from the measure data theory on Carnot groups.  In the
form needed here, they are contained in \cite{PVTAMS,TW}.  If $D\Subset\G$
is bounded and $\nu$ is a finite nonnegative Radon measure in $D$, there is a
nonnegative $p$-superharmonic solution $v$ of
\begin{equation}
 -\Delta_{\G,p}v=\nu\quad\hbox{in }D,\quad v=0\quad\hbox{on }\partial D,
 \label{eq:linear-dirichlet}
\end{equation}
and
\begin{equation}
  v(x)\le C_*\W_{1,p}^{2\diam D}[\nu](x),
  \quad x\in D.
 \label{eq:linear-wolff}
\end{equation}

We shall need the following form of the Whitney decomposition.  It is a
standard consequence of the dyadic Whitney covering lemma in geometrically
doubling quasi-metric spaces; see \cite[Lemma 4.2]{Covering}.  We include the
statement with the precise properties used below.

\begin{lemma}\label{lem:whitney}
 Let $U\subsetneq\G$ be a nonempty open set.  There are constants
 $\Lambda>1$ and $N\in\mathbb N$, depending only on the doubling structure
 of $(\G,d)$, and a countable family of balls
 $B_j=B_{r_j}(x_j)$ such that
 \begin{equation}
 U=\bigcup_jB_j,\quad 5B_j\subset U,
 \quad \sum_j\chi_{5B_j}\le N.
 \label{eq:whitney-cover}
 \end{equation}
 Moreover, for every $j$ there is a point $z_j\in U^c$ for which
 \begin{equation}
 d(x_j,z_j)\le\Lambda r_j.
 \label{eq:whitney-boundary}
 \end{equation}
 The radii may be chosen as a fixed multiple of dyadic numbers, and if $5B_i\cap5B_j\ne\varnothing$,
 then $r_i/r_j$ is bounded above and below by structural constants.
\end{lemma}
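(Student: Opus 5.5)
We will prove Lemma~\ref{lem:whitney} by the classical Whitney construction, run in the geodesic doubling space $(\G,d)$ via dyadic cubes. Put $\delta(x)=d(x,U^c)$, which is positive and finite on $U$ because $U$ is open and $U\neq\G$; moreover $\delta$ is $1$-Lipschitz. We fix a system of dyadic cubes $\{Q^k_\alpha\}$ on $\G$ in the sense of Christ or Hytönen--Kairema. Each $Q^k_\alpha$ has a centre $x^k_\alpha$ with
\[
 B_{c_0\rho^k}(x^k_\alpha)\subset Q^k_\alpha\subset B_{C_0\rho^k}(x^k_\alpha),
\]
for a fixed $\rho\in(0,1)$. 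Cubes of the same generation are disjoint and cover $\G$, and the cubes are nested.

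\textbf{Step 1 (selection).} Let $A\geq 1$ be large, to be fixed. Call a cube admissible if
\[
 A\,C_0\rho^k\le\delta(x)\quad\text{for some } x\in Q^k_\alpha,
\]
and select the maximal admissible cubes. Every $x\in U$ lies in an admissible cube, since one can take $\rho^k$ small. Maximality, together with the fact that $\delta$ is bounded on the parent cube at the point $x$, gives the two-sided comparison
\[
 A C_0\rho^k-2C_0\rho^{k}\le\delta\le A C_0\rho^{k-1}+2C_0\rho^{k-1}\quad\text{on } Q^k_\alpha .
\]
The lower bound uses the Lipschitz property and $\diam Q\le 2C_0\rho^k$. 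Maximal cubes are pairwise disjoint and cover $U$.

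\textbf{Step 2 (balls).} For each selected cube we set
\[
 x_j=x^k_\alpha,\qquad r_j=C_0\rho^k .
\]
These radii are a fixed multiple of dyadic numbers, and $Q^k_\alpha\subset B_j$, so $U=\bigcup_j B_j$. Choose $A\ge 10$. Then $\delta(x_j)\ge (A-2)r_j>5r_j$, hence $5B_j\subset U$. Choosing $z_j\in U^c$ with $d(x_j,z_j)=\delta(x_j)$, which is possible because closed balls are compact, gives
\[
 d(x_j,z_j)\le (A+2)\rho^{-1}r_j=:\Lambda r_j .
\]

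\textbf{Step 3 (comparability and overlap).} This is the main point. Suppose $y\in 5B_i\cap5B_j$. The Lipschitz property gives
\[
 \delta(y)\in\bigl[\delta(x_i)-5r_i,\ \delta(x_i)+5r_i\bigr],
\]
so $\delta(y)$ is comparable to $r_i$ with constants depending on $A$ and $\rho$. The same holds for $r_j$, which yields $r_i/r_j\in[c,c^{-1}]$.

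For bounded overlap, fix $y$. Every $j$ with $y\in5B_j$ has $r_j\simeq\delta(y)=:r$ and $x_j\in B_{5Cr}(y)$. The balls $B_{c_0\rho^{k_j}}(x_j)$ are disjoint because they lie in disjoint cubes, each has radius at least $c\,r$, and all are contained in $B_{C'r}(y)$. Volume counting with \eqref{eq:ball-volume} then bounds their number by a structural constant $N$.

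The only delicate points are keeping the constants uniform, which requires choosing $A$ before everything else, and checking the existence of the dyadic system. The latter we quote from \cite[Lemma 4.2]{Covering} or Christ's construction. Alternatively, one can avoid dyadic cubes by running a Vitali-type selection: take a maximal $\delta$-separated family in each layer
\[
 \{\,2^{-k-1}<\delta\le2^{-k}\,\},
\]
with radii proportional to $2^{-k}$, and the same estimates go through.
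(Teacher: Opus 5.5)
Your proof is correct, but it takes a different route from the paper.

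\textbf{Your route.} You run the classical Stein-type Whitney construction on a Christ/Hyt\"onen--Kairema dyadic system. You select the maximal admissible cubes. Comparability of radii and bounded overlap then follow from the $1$-Lipschitz bounds on $\delta$ and from the disjointness of the selected cubes.

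\textbf{The paper's route.} The paper never uses dyadic cubes. It takes a maximal pairwise disjoint subfamily of the balls $B_{r(x)}(x)$, where $r(x)$ is the largest power of $2$ with $r(x)\le\delta(x)/100$. It then shows directly that the fivefold enlargements cover $U$. This step uses the Lipschitz property of $\delta$ to get $r(x)<3r_j^0$, rather than a bounded-radius Vitali lemma. Finally it sets $r_j=5r_j^0$, and the same disjoint balls $B_j^0$ drive the volume count.

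\textbf{What each buys.} Your version gives extra structure: the selected cubes genuinely partition $U$. The price is importing the nontrivial existence theorem for dyadic systems, together with its constants $c_0,C_0,\rho$. The paper's version is self-contained, using only a maximal disjoint subfamily, separability and \eqref{eq:ball-volume}. It also yields explicit constants $\Lambda=40$ and $N=240^Q$, and radii exactly $5\cdot2^k$.

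\textbf{Three small points to tidy up.}
\begin{enumerate}
\item For the radii $C_0\rho^k$ to be literally a fixed multiple of dyadic numbers, take $\rho=2^{-M}$, which the Hyt\"onen--Kairema construction allows.
\item The existence of a maximal admissible cube containing a given $x$ uses $\delta(x)<\infty$, that is, $U\ne\G$. Admissibility of a generation-$k$ cube containing $x$ forces $(A-2)C_0\rho^k\le\delta(x)$, so large ancestors are not admissible. You should say this explicitly.
\item Non-admissibility of the parent already gives $\delta<AC_0\rho^{k-1}$ on the selected cube. The extra $2C_0\rho^{k-1}$ in your upper bound is harmless but unnecessary.
\end{enumerate}
The layer-by-layer Vitali selection you mention at the end is close in spirit to the paper's argument.
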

\begin{proof}
Let $\delta(x)=d(x,U^c)$ for $x\in U$.
Since $U$ is open, $\delta(x)>0$, and
\[
|\delta(x)-\delta(y)|\le d(x,y),
\quad x,y\in U.
\]
For each $x\in U$, choose the largest dyadic number
$r(x)=2^k$ such that $r(x)\le\delta(x)/100$.
Then
\[
100r(x)\le\delta(x)<200r(x).
\]
Choose a maximal pairwise disjoint subfamily
$
B_j^0=B_{r_j^0}(x_j),
r_j^0=r(x_j)
$
of $\{B_{r(x)}(x):x\in U\}$.
Since $\G$ is separable, this family is countable.

We first show that the balls $5B_j^0$ cover $U$.
For any $x\in U$, maximality gives an index $j$ such that
$B_{r(x)}(x)\cap B_j^0\ne\varnothing$.
Hence $d(x,x_j)<r(x)+r_j^0$, and
\[
100r(x)\le\delta(x)
\le\delta(x_j)+d(x,x_j)
<200r_j^0+r(x)+r_j^0.
\]
It follows that $r(x)<3r_j^0$ and
$d(x,x_j)<4r_j^0$. Thus
\[
U\subset\bigcup_j5B_j^0.
\]
Set $r_j=5r_j^0$ and $B_j=B_{r_j}(x_j)$.
The choice of $r_j^0$ gives
\[
20r_j\le\delta(x_j)<40r_j.
\]
Thus $5B_j\subset U$, and hence $\bigcup_jB_j\subset U$.
Combining this with the preceding covering
$U\subset\bigcup_j5B_j^0=\bigcup_jB_j$, we obtain
\[
U=\bigcup_jB_j.
\]
By the definition of $\delta(x_j)$, we can also choose
$z_j\in U^c$ such that
\[
d(x_j,z_j)<40r_j.
\]

We next compare $r_i$ and $r_j$ when
$5B_i\cap5B_j\ne\varnothing$, in this case,
$d(x_i,x_j)<5r_i+5r_j$. Therefore
\[
20r_i\le\delta(x_i)
\le\delta(x_j)+d(x_i,x_j)
<40r_j+5r_i+5r_j,
\]
which gives $r_i<3r_j$.
Interchanging $i$ and $j$, we obtain
\[
\frac13r_j<r_i<3r_j.
\]
Finally, we show that each point belongs to at most a fixed
number of the balls $5B_j$.
Fix $x\in U$. Since $U=\bigcup_jB_j$, we can choose $j_0$
such that $x\in B_{j_0}\subset5B_{j_0}$.
For every $j$ with $x\in5B_j$, both $5B_j$ and $5B_{j_0}$
contain $x$, so their intersection is nonempty.
The preceding comparison therefore gives
\[
\frac13r_{j_0}<r_j<3r_{j_0}.
\]
Since $r_j^0=r_j/5$, the radius of the original ball $B_j^0$
satisfies
$
r_j^0=\frac{r_j}{5}>\frac{r_{j_0}}{15}.
$
Moreover, for $y\in B_j^0$, we have
$d(y,x_j)<r_j/5$ and $d(x_j,x)<5r_j$. Hence
\[
\begin{aligned}
d(y,x)
\le d(y,x_j)+d(x_j,x)
<\frac{r_j}{5}+5r_j
=\frac{26}{5}r_j
<\frac{78}{5}r_{j_0}<16r_{j_0}.
\end{aligned}
\]
Thus, for every $j$ with $x\in5B_j$,
$
B_j^0\subset B_{16r_{j_0}}(x),
|B_j^0|\ge |B_{r_{j_0}/15}(e)|.
$
The original balls $B_j^0$ were chosen to be pairwise disjoint.
Adding their volumes, we obtain
\[
\begin{aligned}
|B_{r_{j_0}/15}(e)|
\sum_{\{j:x\in5B_j\}}1
\le\sum_{\{j:x\in5B_j\}}|B_j^0|
=\left|\bigcup_{\{j:x\in5B_j\}}B_j^0\right|
\le |B_{16r_{j_0}}(x)|.
\end{aligned}
\]
The sum on the left counts the balls $5B_j$ containing $x$.
Therefore, by \eqref{eq:ball-volume},
\[
\sum_j\chi_{5B_j}(x)
=\sum_{\{j:x\in5B_j\}}1
\le
\frac{|B_{16r_{j_0}}(x)|}{|B_{r_{j_0}/15}(e)|}
=240^Q.
\]
For $x\notin U$, this sum is zero because $5B_j\subset U$
for every $j$. Thus the overlap bound holds on all of $\G$.
Since each $r_j=5r_j^0$ is five times a dyadic number,
\eqref{eq:whitney-cover} and \eqref{eq:whitney-boundary}
follow with $N=240^Q$ and $\Lambda=40$.
\end{proof}
\subsection{The good-lambda estimate}

In this subsection, we first establish a good-$\lambda$ estimate relating
$\W_{1,p}^{\infty}[\nu]$ to $\Mcal_p^\eta[\nu]$.
We then deduce the local exponential integrability estimate
in Corollary~\ref{cor:localexp}.

\begin{proposition}\label{prop:goodlambda}
 Let $1<p<Q$, $0\le\eta<p-1$, and let $\nu$ be a nonnegative finite
 measure with $\supp\nu\subset B_R(x_0)$.  There are constants
  $A>1$, $C>0$, $c>0$, and $\epsilon_0>0$, depending only on
  $\G,p,\eta,R$, the fixed cutoff $1/2$ in $h_\eta$, and the Whitney
  constants $\Lambda,N$ in Lemma~\ref{lem:whitney}, such that, whenever
 \begin{equation}
 \lambda\ge C\nu(\G)^{1/(p-1)}R^{-(Q-p)/(p-1)},
 \quad 0<\epsilon\le\epsilon_0,
 \label{eq:lambda-range}
 \end{equation}
 one has
 \begin{equation}
 \begin{split}
 &\left|\left\{\W_{1,p}^{\infty}[\nu]>A\lambda,\
 (\Mcal_{p}^{\eta}[\nu])^{1/(p-1)}\le\epsilon\lambda\right\}\right|\\
 &\quad\le C\exp\!\left(-c\epsilon^{-
 \frac{p-1}{p-1-\eta}}\right)
 \left|\left\{\W_{1,p}^{\infty}[\nu]>\lambda\right\}\right|.
 \end{split}
 \label{eq:goodlambda}
 \end{equation}
\end{proposition}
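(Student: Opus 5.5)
The plan follows the Nguyen--Véron good-$\lambda$ argument, with dyadic cubes replaced by the Whitney balls of Lemma~\ref{lem:whitney}.

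\emph{Step 1: Whitney decomposition of the superlevel set.} Set $U=\{\W_{1,p}^\infty[\nu]>\lambda\}$, which is open by lower semicontinuity of the Wolff potential. The lower bound on $\lambda$ in \eqref{eq:lambda-range} ensures that $U\ne\G$, and in fact that $U$ is contained in a fixed dilate $B_{\kappa R}(x_0)$. Indeed, for $d(x,x_0)\ge 2R$ we have $\nu(B_t(x))=0$ when $t<d(x,x_0)-R$, so
\[
\W_{1,p}^\infty[\nu](x)\le C\nu(\G)^{1/(p-1)}R^{-(Q-p)/(p-1)},
\]
since $p<Q$. Apply Lemma~\ref{lem:whitney} to $U$ to get balls $B_j$ with $U=\bigcup_j B_j$, bounded overlap, and points $z_j\in U^c$ with $d(x_j,z_j)\le\Lambda r_j$. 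It suffices to prove, for each $j$,
\[
|\{x\in B_j:\W_{1,p}^\infty[\nu]>A\lambda,\ (\Mcal_p^\eta[\nu])^{1/(p-1)}\le\epsilon\lambda\}|\le C e^{-c\epsilon^{-(p-1)/(p-1-\eta)}}|B_j|,
\]
because summing over $j$ and using $\sum_j|B_j|\le N|U|$ gives \eqref{eq:goodlambda}. Fix $j$ and assume the set is nonempty, so it contains a point $y_j$ with $(\Mcal_p^\eta[\nu](y_j))^{1/(p-1)}\le\epsilon\lambda$.

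\emph{Step 2: split the potential at scale $r_j$.} For $x\in B_j$ write
\[
\W_{1,p}^\infty[\nu](x)=\W_{1,p}^{\delta r_j}[\nu](x)+\int_{\delta r_j}^\infty(\cdots)\frac{dt}{t},
\]
with $\delta$ a small structural constant to be fixed. For $t\ge\delta r_j$ we have $B_t(x)\subset B_{Ct}(z_j)$, so the large-scale part is at most $C'\W_{1,p}^\infty[\nu](z_j)\le C'\lambda$, up to an error over $t\in[\delta r_j,Cr_j]$. That error is bounded by $C(\nu(B_{Cr_j}(z_j))/r_j^{Q-p})^{1/(p-1)}$, which is either $\le C\W(z_j)\le C\lambda$ or is controlled by the maximal function at $y_j$ (giving $\le C\epsilon\lambda$). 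Choosing $A=C'+2$ reduces the claim to
\[
|\{x\in B_j:\W_{1,p}^{\delta r_j}[\nu](x)>\lambda\}|\le Ce^{-c\epsilon^{-(p-1)/(p-1-\eta)}}|B_j|.
\]
Only $\nu_j=\nu\restriction B_{2r_j}(x_j)$ contributes to this truncated potential for $\delta<1$.

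\emph{Step 3: exponential integrability, the main obstacle.} Here I need a local estimate of the form
\[
\frac{1}{|B|}\int_B\exp\!\bigl(c\,(\W_{1,p}^{r}[\nu_j]/M)^{(p-1)/(p-1-\eta)}\bigr)\le C,
\qquad M=\sup_{B_{3r_j}(x_j)}(\Mcal_p^\eta[\nu])^{1/(p-1)}\lesssim\epsilon\lambda.
\]
The second inequality uses $y_j\in B_j$, doubling, and the fact that $h_\eta$ is comparable at comparable radii. To get this estimate I would write $\W_{1,p}^{r}$ as a sum over dyadic $t_k=2^{-k}r$ of $(\nu_j(B_{t_k}(x))t_k^{p-Q})^{1/(p-1)}$. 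The summand is at most $M h_\eta(t_k)^{1/(p-1)}\approx M k^{-\eta/(p-1)}$, while its average over $B_j$ decays like $2^{-kp/(p-1)}$, using $\nu_j(\G)\le M^{p-1}r^{Q-p}$ and Fubini. Summing the first $K$ terms gives $\lesssim M K^{1-\eta/(p-1)}$. The tail beyond $K$ is handled by Chebyshev in the $L^1$ (or weak-$L^1$) bound for the small scales, yielding measure $\lesssim e^{-cK}|B_j|$. Taking $K\approx\epsilon^{-(p-1)/(p-1-\eta)}$ makes the first sum $\le\lambda/2$, so the tail must exceed $\lambda/2$. The bookkeeping at this step is to make the tail's exponential smallness come out correctly from a weighted Chebyshev with weights $2^{-k\theta}$. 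The scales $t\ge 1/2$, where $h_\eta$ is constant, are absorbed into constants depending on $R$, because $r_j\lesssim R$ by Step 1.
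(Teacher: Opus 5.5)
Your architecture matches the paper's: Whitney balls of $U=\{\W_{1,p}^\infty[\nu]>\lambda\}$, large scales controlled through $z_j\notin U$, about $K\approx\epsilon^{-(p-1)/(p-1-\eta)}$ intermediate dyadic scales bounded pointwise by $C\epsilon\lambda K^{\theta}$ with $\theta=(p-1-\eta)/(p-1)$, and a weighted Chebyshev estimate on the remaining small scales producing $2^{-cK}$. But the reduction at the end of Step~2 and the start of Step~3 contains a step that fails. You drop the condition $(\Mcal_p^\eta[\nu](x))^{1/(p-1)}\le\epsilon\lambda$ from the set being measured, and try to recover it from the single point $y_j$ via the claim $\sup_{B_{3r_j}(x_j)}(\Mcal_p^\eta[\nu])^{1/(p-1)}\lesssim\epsilon\lambda$. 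This claim is false. $\Mcal_p^\eta[\nu](y_j)$ sees the mass near a point $a\in B_j$ only at radii $t>d(a,y_j)$, which may be comparable to $r_j$, so doubling and the comparability of $h_\eta$ give nothing about small radii at $a$.

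Concretely, put an atom of mass $c\,r_j^{Q-p}h_\eta(r_j)(\epsilon\lambda)^{p-1}$ at a point $a\in B_j$ with $d(a,y_j)\ge r_j/2$. It raises $\Mcal_p^\eta[\nu](y_j)$ by at most a small multiple of $(\epsilon\lambda)^{p-1}$, while $\Mcal_p^\eta[\nu](a)=\infty$ because $Q>p$. The same atom gives $\W_{1,p}^{\delta r_j}[\nu]>\lambda$ on a ball about $a$ of radius comparable to $r_j\epsilon^{(p-1)/(Q-p)}$, up to a factor depending only on $r_j$. So the left side of your reduced claim $|\{x\in B_j:\W_{1,p}^{\delta r_j}[\nu](x)>\lambda\}|\le Ce^{-c\epsilon^{-1/\theta}}|B_j|$ can be only polynomially small in $\epsilon$. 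An exponential integrability bound normalized by a supremum of the maximal function over the ball is therefore neither available nor the right intermediate statement.

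The repair is exactly what the paper does. Keep $x$ in the good set $E_{j,\epsilon,\lambda}$ of \eqref{eq:localized-set}, and apply the intermediate-scale bound \eqref{eq:large-scales} at each such $x$ using $\Mcal_p^\eta[\nu](x)$ itself. Use $y_j$ only for the mass bound $\nu(B_{3\rho_j}(y_j))\le(3\rho_j)^{Q-p}h_\eta(3\rho_j)(\epsilon\lambda)^{p-1}$, which is all the small-scale tail needs.

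There is also a smaller problem in that tail. From the mass bound alone, the average over $B_j$ of $(\nu_j(B_{t_k}(x))t_k^{p-Q})^{1/(p-1)}$ decays like $2^{-kp/(p-1)}$ only for $p\ge2$, where Jensen applies to the concave power $1/(p-1)\le1$. For $1<p<2$ a single atom shows it decays more slowly, and not at all when $p\le2Q/(Q+1)$. Apply Chebyshev to the $(p-1)$-st powers of the dyadic pieces instead. These are linear in $\nu$, so Fubini gives $\int_{B_j}g_i^{p-1}\,dx\le C2^{-ip}|B_j|(\epsilon\lambda)^{p-1}$ as in \eqref{eq:dyadic-Lp}. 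The weights $a_i\propto2^{-\sigma(i-m-1)}$ with $\sigma<p/(p-1)$ then produce the factor $2^{-mp}$.
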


\begin{proof}
 Put $s=1/(p-1)$, $\theta=(p-1-\eta)/(p-1)$, and
 \[
 E_\lambda=\{\W_{1,p}^{\infty}[\nu]>\lambda\}.
 \]
 The potential is lower semicontinuous, so $E_\lambda$ is open.  If
 $x\notin B_{2R}(x_0)$, then $B_t(x)\cap\supp\nu=\varnothing$ for
 $t<d(x,x_0)-R$, and therefore
 \begin{equation}
 \W_{1,p}^{\infty}[\nu](x)
 \le \frac{p-1}{Q-p}\nu(\G)^s
 (d(x,x_0)-R)^{-(Q-p)/(p-1)}.
 \label{eq:far-level}
 \end{equation}
 By \eqref{eq:lambda-range}, $E_\lambda$ is contained in a ball
 $B_{C_R}(x_0)$; in particular, it has finite measure and is a proper open
 subset of $\G$. If $E_\lambda=\varnothing$, the conclusion is immediate. We therefore assume that $E_\lambda\neq\varnothing$. Apply Lemma~\ref{lem:whitney} to fix a Whitney decomposition
 $E_\lambda=\bigcup_jB_j$ with
 $B_j=B_{r_j}(x_j)$.  We use the version for which the dilates $5B_j$ have
 bounded overlap and there is $z_j\in E_\lambda^c$ satisfying
 \begin{equation}
 d(z_j,x_j)\le\Lambda r_j.
 \label{eq:whitney-point}
 \end{equation}
 Let $\rho_j=(\Lambda+1)r_j$.  If $x\in B_j$ and $t\ge\rho_j$, then
 $d(x,z_j)\le\rho_j\le t$, hence
 \[
 B_t(x)\subset B_{2t}(z_j).
 \]
 A change of variables $\tau=2t$ gives
 \begin{equation}
 \begin{split}
 \int_{\rho_j}^{\infty}
 \left(\frac{\nu(B_t(x))}{t^{Q-p}}\right)^s\frac{\dd t}{t}
 &\le 2^{2(Q-p)/(p-1)}
 \W_{1,p}^{\infty}[\nu](z_j)\\
 &\le C_1\lambda.
 \end{split}
 \label{eq:tail-comparison}
 \end{equation}
 Choose $A=C_1+2$, It follows that
 \begin{equation}
 \begin{split}
 F_{\epsilon,\lambda}\cap B_j
 &\subset E_{j,\epsilon,\lambda},\\
 F_{\epsilon,\lambda}
 &=\{\W_{1,p}^{\infty}[\nu]>A\lambda,
 (\Mcal_p^\eta[\nu])^s\le\epsilon\lambda\},\\
 E_{j,\epsilon,\lambda}
 &=\left\{x\in B_j:\W_{1,p}^{\rho_j}[\nu](x)>\lambda,
 (\Mcal_p^\eta[\nu](x))^s\le\epsilon\lambda\right\}.
 \end{split}
 \label{eq:localized-set}
 \end{equation}

 We next estimate $|E_{j,\epsilon,\lambda}|$.  If this set is empty there
 is nothing to prove, so choose $y_j\in E_{j,\epsilon,\lambda}$.  Since
 $E_\lambda\subset B_{C_R}(x_0)$, the Whitney radii satisfy
 $\rho_j\le C_R$.  Let $m_0$ be an integer such that
 $2^{-m_0}\rho_j\le1/2$ for every $j$.  For $m\ge m_0$ and
 $x\in E_{j,\epsilon,\lambda}$, the definition of the maximal operator
 gives
 \begin{equation}
 \begin{split}
 \int_{2^{-m}\rho_j}^{\rho_j}
 \left(\frac{\nu(B_t(x))}{t^{Q-p}}\right)^s\frac{\dd t}{t}
 \le\epsilon\lambda
 \int_{2^{-m}\rho_j}^{\rho_j}h_\eta(t)^s\frac{\dd t}{t}
 \le C_R\epsilon\lambda+C\epsilon\lambda m^\theta.
 \end{split}
 \label{eq:large-scales}
 \end{equation}
 Here the first constant accounts for the interval where $t\ge1/2$; on
 $(0,1/2)$ the integral is computed explicitly from
 \[
 \int(-\log t)^{-\eta/(p-1)}\frac{\dd t}{t}
 =-\frac{p-1}{p-1-\eta}
 (-\log t)^\theta.
 \]
Define
 \[
 g_i(x)=\int_{2^{-i}\rho_j}^{2^{-i+1}\rho_j}
 \left(\frac{\nu(B_t(x))}{t^{Q-p}}\right)^s\frac{\dd t}{t},
 \quad i\ge m+1.
 \]
 It follows from \eqref{eq:large-scales} that
 \begin{equation}
 E_{j,\epsilon,\lambda}\subset
 \left\{x\in B_j:\sum_{i=m+1}^{\infty}g_i(x)>L_m\right\},
 \quad L_m=\lambda\bigl(1-C_R\epsilon-C\epsilon m^\theta\bigr).
 \label{eq:tail-level}
 \end{equation}
 We now establish the estimate for one dyadic piece.  Monotonicity of
 $t\mapsto\nu(B_t(x))$, together with $t\ge2^{-i}\rho_j$ on the
 interval of integration, gives
 \begin{equation}
 g_i(x)^{p-1}\le C
 \frac{\nu(B_{2^{-i+1}\rho_j}(x))}{(2^{-i}\rho_j)^{Q-p}}.
 \label{eq:dyadic-jensen}
 \end{equation}
Recall that $y_j\in B_j=B_{r_j}(x_j)$ and
$\rho_j=(\Lambda+1)r_j$.
For $x\in B_j$, we have $d(x,y_j)<2r_j$.
Since $2^{-i+1}\rho_j\le\rho_j$, the triangle inequality gives
\[
B_{2^{-i+1}\rho_j}(x)
\subset B_{2^{-i+1}\rho_j+2r_j}(y_j)
\subset B_{3\rho_j}(y_j),
\]
Thus, if $z\notin B_{3\rho_j}(y_j)$, then
\[
B_j\cap B_{2^{-i+1}\rho_j}(z)=\varnothing.
\]
This allows us to restrict the integral in $z$ to
$B_{3\rho_j}(y_j)$ after changing the order of integration.
 Consequently, Fubini's theorem and \eqref{eq:ball-volume} yield
\begin{equation}
\begin{split}
\int_{B_j}g_i(x)^{p-1}\,dx
&\le \frac{C}{(2^{-i}\rho_j)^{Q-p}}
\int_{B_j}\nu(B_{2^{-i+1}\rho_j}(x))\,dx\\
&=\frac{C}{(2^{-i}\rho_j)^{Q-p}}
\int_{B_{3\rho_j}(y_j)}
|B_j\cap B_{2^{-i+1}\rho_j}(z)|\,d\nu(z)\\
&\le C\frac{(2^{-i}\rho_j)^Q}{(2^{-i}\rho_j)^{Q-p}}
\nu(B_{3\rho_j}(y_j))\\
&\le C2^{-ip}\rho_j^p
(3\rho_j)^{Q-p}h_\eta(3\rho_j)(\epsilon\lambda)^{p-1}\\
&\le C2^{-ip}\rho_j^Q(\epsilon\lambda)^{p-1}\\
&\le C2^{-ip}|B_j|(\epsilon\lambda)^{p-1}.
\end{split}
\label{eq:dyadic-Lp}
\end{equation}
 The last inequality follows because $h_\eta$ is bounded above and
 $\rho_j\asymp r_j$. We first choose $m$ so that $L_m\ge\lambda/2$.
Choose $\kappa>0$ such that $C\kappa^\theta\le1/4$, where
$C$ is the constant in the definition of $L_m$.
Decrease $\epsilon_0>0$ so that
\[
\epsilon_0\le1,\quad
C_R\epsilon_0\le\frac14,\quad
\kappa\epsilon_0^{-1/\theta}\ge2\max\{1,m_0\}.
\]
Then, for every $0<\epsilon\le\epsilon_0$, we can choose an
integer $m\ge m_0$ satisfying
\begin{equation}
\frac12\kappa\epsilon^{-1/\theta}
\le m\le\kappa\epsilon^{-1/\theta}.
\label{eq:choose-m}
\end{equation}
Since
\[
C\epsilon m^\theta
\le C\epsilon(\kappa\epsilon^{-1/\theta})^\theta
=C\kappa^\theta\le\frac14,
\]
we have
\[
L_m=\lambda(1-C_R\epsilon-C\epsilon m^\theta)
\ge\frac{\lambda}{2}>0.
\]
Fix $0<\sigma<p/(p-1)$, depending only on $p$, and set
\[
a_i=(1-2^{-\sigma})2^{-\sigma(i-m-1)},
\quad i\ge m+1.
\]
Then
\[
\sum_{i=m+1}^{\infty}a_i
=(1-2^{-\sigma})\sum_{k=0}^{\infty}2^{-\sigma k}
=1.
\]
If $g_i(x)\le a_iL_m$ for every $i\ge m+1$, then
$\sum_{i=m+1}^{\infty}g_i(x)\le L_m$.
Thus \eqref{eq:tail-level} gives
\[
E_{j,\epsilon,\lambda}
\subset
\bigcup_{i=m+1}^{\infty}
\{x\in B_j:g_i(x)>a_iL_m\}.
\]
By Chebyshev's inequality and \eqref{eq:dyadic-Lp},
\begin{equation}
\begin{aligned}
|E_{j,\epsilon,\lambda}|
&\le\sum_{i=m+1}^{\infty}
\frac{1}{(a_iL_m)^{p-1}}
\int_{B_j}g_i(x)^{p-1}\,dx\\
&\le C\left(\frac{\epsilon\lambda}{L_m}\right)^{p-1}
|B_j|\sum_{i=m+1}^{\infty}2^{-ip}a_i^{-(p-1)}\\
&=\frac{C2^{-(m+1)p}}{(1-2^{-\sigma})^{p-1}}
\left(\frac{\epsilon\lambda}{L_m}\right)^{p-1}
|B_j|\sum_{k=0}^{\infty}2^{[\sigma(p-1)-p]k}\\
&\le C2^{-(m+1)p}
\left(\frac{\epsilon\lambda}{L_m}\right)^{p-1}|B_j|.
\end{aligned}
\label{eq:local-goodlambda}
\end{equation}
Here we set $k=i-m-1$ in the third line.
The series converges because $\sigma(p-1)-p<0$. Since $L_m\ge\lambda/2$, $\epsilon\le1$ and
$m\ge\frac12\kappa\epsilon^{-1/\theta}$, it follows that
\begin{equation}
\begin{aligned}
|E_{j,\epsilon,\lambda}|
&\le C\epsilon^{p-1}2^{-(m+1)p}|B_j|\\
&\le C\exp\!\left(-c\epsilon^{-1/\theta}\right)|B_j|,
\end{aligned}
\label{eq:local-final}
\end{equation}
where $c=\frac12\kappa p\log2$. Finally, the Whitney balls cover $E_\lambda$ and have bounded
overlap. Summing \eqref{eq:local-final} and using
\eqref{eq:localized-set}, we obtain
\[
\begin{aligned}
|F_{\epsilon,\lambda}|
&\le\sum_j|E_{j,\epsilon,\lambda}|\\
&\le C\exp\!\left(-c\epsilon^{-1/\theta}\right)
\sum_j|B_j|\\
&\le C\exp\!\left(-c\epsilon^{-1/\theta}\right)|E_\lambda|.
\end{aligned}
\]
Since $1/\theta=(p-1)/(p-1-\eta)$, this proves
\eqref{eq:goodlambda}.
\end{proof}

\begin{corollary}\label{cor:localexp}
 Let $\eta=(p-1)(\beta-1)/\beta$.  Under the assumptions of Proposition
 \ref{prop:goodlambda}, if $\|\Mcal_p^\eta[\nu]\|_\infty\le L$, then
 there are constants $\delta=\delta(\G,p,\beta,R,L)>0$ and
 $C=C(\G,p,\beta,R,L)>0$ for which
 \begin{equation}
 \int_{B_R(x_0)}
 \exp\!\left(\delta[\W_{1,p}^{\infty}[\nu](x)]^\beta\right)dx
 \le C|B_R(x_0)|.
 \label{eq:local-exponential}
 \end{equation}
\end{corollary}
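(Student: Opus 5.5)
The plan is to convert the good-$\lambda$ inequality of Proposition~\ref{prop:goodlambda} into a distribution function estimate and then integrate the exponential. Note that $\eta=(p-1)(\beta-1)/\beta<p-1$, so the proposition applies. Its exponent is
\[
\frac{p-1}{p-1-\eta}=\beta .
\]
Write $W=\W_{1,p}^\infty[\nu]$. Take $\lambda_0=C\nu(\G)^{1/(p-1)}R^{-(Q-p)/(p-1)}$, the threshold in \eqref{eq:lambda-range}. The total mass is controlled by the hypothesis: $\nu(\G)=\nu(B_R(x_0))\le L\,R^{Q-p}h_\eta(R)$, so $\lambda_0\le C(\G,p,\beta,R,L)$.

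For $\lambda\ge\lambda_0$ I will use the choice
\[
\epsilon=\epsilon(\lambda)=L^{1/(p-1)}/\lambda ,
\]
so that $(\Mcal_p^\eta[\nu])^{1/(p-1)}\le\epsilon\lambda$ holds everywhere. The second set in \eqref{eq:goodlambda} is then all of $\{W>A\lambda\}$. The restriction $\epsilon\le\epsilon_0$ requires $\lambda\ge\lambda_1:=\max\{\lambda_0,L^{1/(p-1)}/\epsilon_0\}$. For $\lambda\ge\lambda_1$ this gives
\[
|\{W>A\lambda\}|\le C\exp\!\bigl(-c'\lambda^{\beta}\bigr)\,|\{W>\lambda\}|,
\qquad c'=cL^{-\beta/(p-1)}.
\]
Iterating along $\lambda_k=A^k\lambda_1$, and using $|\{W>\lambda_1\}|\le|B_{C_R}(x_0)|\le C|B_R(x_0)|$ (from the containment shown in the proof of the proposition), yields
\[
|\{W>A^{k}\lambda_1\}|\le C|B_R(x_0)|\exp\!\bigl(-c'(A^{k-1}\lambda_1)^{\beta}\bigr).
\]
Only the last factor is kept; the earlier factors are bounded by $1$ once $C\exp(-c'\lambda_1^\beta)\le1$, which can be arranged by enlarging $\lambda_1$. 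Interpolating between consecutive levels gives
\[
|\{W>t\}|\le C|B_R(x_0)|\exp\!\bigl(-c''t^\beta\bigr)\quad\text{for } t\ge A\lambda_1,
\qquad c''=c'A^{-2\beta}.
\]

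Next, apply the layer-cake formula on $B_R(x_0)$:
\[
\int_{B_R(x_0)}e^{\delta W^\beta}\,dx\le e^{\delta(A\lambda_1)^\beta}|B_R(x_0)|+\int_{A\lambda_1}^\infty \delta\beta t^{\beta-1}e^{\delta t^\beta}\,|\{W>t\}|\,dt .
\]
With $\delta=c''/2$ the integral is at most $C|B_R(x_0)|\int_0^\infty \delta\beta t^{\beta-1}e^{-\delta t^\beta}\,dt=C|B_R(x_0)|$. This gives \eqref{eq:local-exponential}, with constants depending on $\G,p,\beta,R,L$ (and on the Whitney constants, which are structural).

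The main obstacle is the bookkeeping in the iteration. The constant $C$ in the good-$\lambda$ inequality is not small, so the geometric decay must come from the factor $\exp(-c'\lambda^\beta)$ alone. This works because $\lambda\ge\lambda_1$ makes that factor at most $1/C$ at every step.

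A second point is that $\epsilon$ depends on $\lambda$ rather than being fixed. This is legitimate because the constants $A,C,c$ in Proposition~\ref{prop:goodlambda} are uniform in $\epsilon\in(0,\epsilon_0]$ and in $\lambda$ within the admissible range.
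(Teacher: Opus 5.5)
Your argument is correct and follows the paper's proof: the same choice $\epsilon=L^{1/(p-1)}/\lambda$, the same mass bound $\nu(\G)\le LR^{Q-p}h_\eta(R)$ to control the threshold and confine the level sets to a fixed ball, and the same layer-cake integration with $\delta\sim L^{-\beta/(p-1)}$. The iteration along $A^k\lambda_1$ is superfluous, since you discard all but the last factor. A single application of \eqref{eq:goodlambda} at level $t/A$, combined with the trivial bound $|\{W>t/A\}|\le|B_{C_R}(x_0)|$, already gives the decay, and this is what the paper does. You should also dispose of $L=0$ separately (then $\nu=0$), because your $\epsilon$ and $c'$ are undefined there.
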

\begin{proof}
If $L=0$, then $\nu=0$ and the integral in
\eqref{eq:local-exponential} equals $|B_R(x_0)|$.
Assume now that $L>0$. We first determine how large $t$ must be to apply
Proposition~\ref{prop:goodlambda}.
To apply Proposition~\ref{prop:goodlambda} with $\lambda=t$,
we first verify the lower bound on $t$ in
\eqref{eq:lambda-range}.
Since $\supp\nu\subset B_R(x_0)$, the maximal potential bound gives
\[
\nu(\G)=\nu(B_R(x_0))
\le L R^{Q-p}h_\eta(R).
\]
It follows that
\[
C\nu(\G)^{1/(p-1)}R^{-(Q-p)/(p-1)}
\le C h_\eta(R)^{1/(p-1)}L^{1/(p-1)},
\]
where $C$ is the constant in \eqref{eq:lambda-range}.
Thus the required lower bound is satisfied whenever
\[
t\ge C h_\eta(R)^{1/(p-1)}L^{1/(p-1)}.
\]
We next bound the measure of
$\{\W_{1,p}^{\infty}[\nu]>t\}$, which appears on the
right hand side of \eqref{eq:goodlambda}.
For $x\notin B_{2R}(x_0)$, we have $d(x,x_0)-R\ge R$.
Hence \eqref{eq:far-level} and the preceding mass estimate give
\[
\begin{aligned}
\W_{1,p}^{\infty}[\nu](x)
&\le \frac{p-1}{Q-p}\nu(\G)^{1/(p-1)}
(d(x,x_0)-R)^{-(Q-p)/(p-1)}\\
&\le \frac{p-1}{Q-p}
\bigl(LR^{Q-p}h_\eta(R)\bigr)^{1/(p-1)}
R^{-(Q-p)/(p-1)}\\
&=\frac{p-1}{Q-p}
h_\eta(R)^{1/(p-1)}L^{1/(p-1)}.
\end{aligned}
\]
Therefore, if $t$ is at least the last quantity, then
$\W_{1,p}^{\infty}[\nu](x)\le t$ outside $B_{2R}(x_0)$.

Choose $K>0$, depending only on $\G,p,\beta,R$, such that
\[
K\ge h_\eta(R)^{1/(p-1)}
\max\left\{C,\frac{p-1}{Q-p}\right\},
\]
and set $t_0=KL^{1/(p-1)}$.
Then, for every $t\ge t_0$, the lower bound in
\eqref{eq:lambda-range} is satisfied and
\[
\{\W_{1,p}^{\infty}[\nu]>t\}\subset B_{2R}(x_0).
\]
In particular,
\[
\bigl|\{\W_{1,p}^{\infty}[\nu]>t\}\bigr|
\le |B_{2R}(x_0)|.
\]
We also choose $K\ge\epsilon_0^{-1}$. For $t\ge t_0$, set
\[
\epsilon=\frac{L^{1/(p-1)}}{t}.
\]
Then $0<\epsilon\le\epsilon_0$, and the maximal potential bound gives
\[
(\Mcal_p^\eta[\nu])^{1/(p-1)}
\le L^{1/(p-1)}=\epsilon t
\]
almost everywhere in $\G$.
Thus the maximal potential condition on the left hand side of
\eqref{eq:goodlambda} is satisfied.
Since
\[
\frac{p-1}{p-1-\eta}=\beta,
\]
applying \eqref{eq:goodlambda} yields
\[
\begin{aligned}
|\{\W_{1,p}^{\infty}[\nu]>At\}|
&\le C\exp\!\left(-ct^\beta L^{-\beta/(p-1)}\right)
|\{\W_{1,p}^{\infty}[\nu]>t\}|\\
&\le C\exp\!\left(-ct^\beta L^{-\beta/(p-1)}\right)
|B_{2R}(x_0)|\\
&\le C\exp\!\left(-ct^\beta L^{-\beta/(p-1)}\right)
|B_R(x_0)|.
\end{aligned}
\]
We now choose the coefficient in the exponential integral.
Set
\[
\delta=\frac{c}{2A^\beta}L^{-\beta/(p-1)}>0.
\]
Replacing $t$ by $t/A$ in the preceding estimate, we obtain
\[
|\{\W_{1,p}^{\infty}[\nu]>t\}|
\le C e^{-2\delta t^\beta}|B_R(x_0)|,
\quad t\ge At_0.
\]
We apply the layer cake formula and split the integral at $At_0$.
For $0<t<At_0$, we bound the measure of the level set in
$B_R(x_0)$ by $|B_R(x_0)|$.
For $t\ge At_0$, we use the exponential decay just proved.
It follows that
\[
\begin{aligned}
&\int_{B_R(x_0)}
\exp\!\left(\delta[\W_{1,p}^{\infty}[\nu](x)]^\beta\right)dx\\
&\quad=|B_R(x_0)|
+\delta\beta\int_0^\infty
t^{\beta-1}e^{\delta t^\beta}
|\{x\in B_R(x_0):\W_{1,p}^{\infty}[\nu](x)>t\}|
\,\dd t\\
&\quad\le |B_R(x_0)|
+\delta\beta|B_R(x_0)|
\int_0^{At_0}t^{\beta-1}e^{\delta t^\beta}\,\dd t
+C\delta\beta|B_R(x_0)|
\int_{At_0}^\infty t^{\beta-1}e^{-\delta t^\beta}\,\dd t\\
&\quad=|B_R(x_0)|e^{\delta(At_0)^\beta}
+C|B_R(x_0)|e^{-\delta(At_0)^\beta}\\
&\quad\le C|B_R(x_0)|.
\end{aligned}
\]
Here
\[
\delta(At_0)^\beta=\frac c2 K^\beta,
\]
so the last constant is independent of $\nu$.
This proves \eqref{eq:local-exponential}, with constants uniform
over all measures satisfying the stated maximal potential bound.
\end{proof}
\subsection{Wolff potential estimates for exponential reactions}
We next establish the Wolff potential estimates needed for the iteration later.
\begin{proposition}\label{prop:subabsorb}
 Under the assumptions of Theorem~\ref{thm:subcritical}, there are positive
 $b$ and $\varepsilon_*$ such that if \eqref{eq:sub-small} holds and
 $\omega=\mu+b\chi_{B_{2R}(x_0)}dx$, then
 \begin{equation}
 \W_{1,p}^\infty\!\left[
  H_l\!\left(\alpha(2c_pC_*\W_{1,p}^\infty[\omega])^\beta\right)
 \right]
 \le\W_{1,p}^\infty[\omega]
 \quad\text{in }\G,
 \label{eq:subabsorb}
 \end{equation}
 and the density inside the brackets belongs to $L^1(\G)$.
\end{proposition}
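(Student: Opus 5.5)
Set $W=\W_{1,p}^\infty[\omega]$ and $f=H_l(\alpha(2c_pC_*W)^\beta)$. The aim is to prove $\W_{1,p}^\infty[f]\le\W_{1,p}^\infty[b\chi_{B_{2R}(x_0)}]\le W$. Since $\W_{1,p}^\infty[b\chi_{B_{2R}}]=b^{1/(p-1)}\W_{1,p}^\infty[\chi_{B_{2R}}]$ and $\W_{1,p}^\infty[\chi_{B_{2R}}]$ is a fixed positive function of size comparable to $(R+d(x,x_0))^{-(Q-p)/(p-1)}$, it suffices to show $f\le\kappa(b)\,\chi$-type bounds whose Wolff potential is at most $b^{1/(p-1)}\W_{1,p}^\infty[\chi_{B_{2R}}]$. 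Because $\W$ is $(p-1)$-homogeneous, the inequality holds if $f$ is a small multiple of $b$ in a suitable nonlinear sense. The smallness comes from two sources: small $\varepsilon_*$ controls the $\mu$ part, and small $b$ controls the background part. Each is exploited through the lower-order cutoff $H_l(t)\le Ct^l e^{t}$ and through $l\beta>Q(p-1)/(Q-p)$.

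\textbf{Step 1 (pointwise decay of $W$).} By \eqref{eq:Wsubadditive}, $W\le c_p(\W[\mu]+\W[b\chi_{B_{2R}}])$. Outside $B_{4R}(x_0)$, the far-level estimate \eqref{eq:far-level} gives
\[
W(x)\le C\bigl(\omega(\G)\bigr)^{1/(p-1)}d(x,x_0)^{-(Q-p)/(p-1)}.
\]
Here $\omega(\G)\le\varepsilon_*R^{Q-p}h_\eta(R)+bC R^Q$, which is small. Hence $W\le 1$ there, and so $f\le C(\alpha W^\beta)^l\le C\omega(\G)^{l\beta/(p-1)}d^{-l\beta(Q-p)/(p-1)}$. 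Since $l\beta(Q-p)/(p-1)>Q$, this is integrable at infinity and small. Its Wolff potential at far points decays at least like $\W$ of an $L^1$ compactly-concentrated mass, i.e.\ like $d^{-(Q-p)/(p-1)}$ times $(\|f\|_{L^1})^{1/(p-1)}$. There is also a local contribution, which is controlled because $f$ is bounded by $C\,d^{-Q-\gamma}$ with $\gamma>0$, making the ball averages of $f$ summable in the Wolff integral.

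\textbf{Step 2 (integrability near the support).} On $B_{4R}(x_0)$, I apply Corollary~\ref{cor:localexp} with $R$ replaced by $4R$ to the measure $\omega$, or to $\mu$ and the background separately, after checking that $\Mcal_p^\eta[b\chi_{B_{2R}}]\le CbR^{p}$-type bounds hold. This gives $\int_{B_{4R}}\exp(\delta W^\beta)\le C$, where $\delta\sim L^{-\beta/(p-1)}$ and $L\lesssim\varepsilon_*+b$. Choosing $\varepsilon_*,b$ small makes $\delta$ large, hence larger than $2\alpha(2c_pC_*)^\beta$. By Hölder and $H_l(t)\le t^l e^t/l!$, I then get
\[
\int_{B_{4R}}f^q\le C\,(\varepsilon_*+b)^{q l\beta/(p-1)}\quad\text{for some }q>Q/p.
\]
Combining with Step 1 shows $f\in L^1(\G)$.

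\textbf{Step 3 (absorption).} For $f\in L^q(B_{4R})$ with $q>Q/p$, I bound $f(B_t(x))\le\|f\|_q t^{Q(1-1/q)}$. Then $\W_{1,p}^{4R}[f\chi_{B_{4R}}]\le C\|f\|_q^{1/(p-1)}$, and beyond radius $R$ the tail is $\le C\|f\|_{L^1}^{1/(p-1)}(R+d)^{-(Q-p)/(p-1)}$. Both are therefore bounded by $C\Theta^{1/(p-1)}\W[\chi_{B_{2R}}]$, where $\Theta\lesssim(\varepsilon_*+b)^{l\beta/(p-1)}$. The target is $\le b^{1/(p-1)}\W[\chi_{B_{2R}}]$, which requires $C(\varepsilon_*+b)^{l\beta/(p-1)}\le b$. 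This holds by first choosing $b$ small, using $l\beta/(p-1)>Q/(Q-p)>1$, and then taking $\varepsilon_*\le b$.

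The main obstacle is Step 2. One must ensure that the good-$\lambda$ constant $\delta$ scales as $L^{-\beta/(p-1)}$ with all other constants uniform, which Corollary~\ref{cor:localexp} provides. One must also track the power of smallness so that the integral gains a factor $(\varepsilon_*+b)^{\text{power}>1}$, using the polynomial factor $t^l$ inside $H_l$ rather than merely boundedness.
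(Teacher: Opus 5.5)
Your proposal is correct and follows essentially the paper's route. Both arguments get exponential integrability from Corollary~\ref{cor:localexp}, then use H\"older to obtain $L^1(\G)\cap L^q(\G)$ bounds with $q>Q/p$, together with tail decay from $l\beta>Q(p-1)/(Q-p)$. Both then compare with the background lower bound $\W_{1,p}^\infty[\chi_{B_{2R}(x_0)}]\ge c(\rho+2R)^{-(Q-p)/(p-1)}$, which gives the same positive smallness exponent $\frac{l\beta-(p-1)}{(p-1)^2}$.

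The only difference is bookkeeping. The paper sets $b=\varepsilon_*=\tau$ and normalizes $\sigma=\tau^{-1}\mu+\chi_{B_{2R}(x_0)}dx$, so the corollary is used at one fixed level and smallness comes from $H_l(\theta t)\le\theta^lH_l(t)$. You instead let $L\lesssim\varepsilon_*+b$ shrink and use $\delta\propto L^{-\beta/(p-1)}$. One small correction: in Step~2 you need $\delta$ to exceed $q\alpha(2c_pC_*)^\beta$ with some margin, not merely $2\alpha(2c_pC_*)^\beta$.
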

\begin{proof}
We shall choose $b=\varepsilon_*=\tau>0$, where $\tau$ will be
fixed at the end of the proof. Let
$\eta=(p-1)(\beta-1)/\beta$. For a measure satisfying
$\|\Mcal_p^\eta[\mu]\|_{L^\infty(\G)}\le\tau$, write
\[
\sigma=\tau^{-1}\mu+\chi_{B_{2R}(x_0)}dx,
\quad \omega=\tau\sigma.
\]
We first obtain bounds for $\sigma$ that are independent of
$\tau$ and $\mu$. For the background measure, the ball volume formula gives
\[
\frac{|B_t(x)\cap B_{2R}(x_0)|}{t^{Q-p}h_\eta(t)}
\le
\begin{cases}
|B_1(e)|t^p/h_\eta(t),&0<t\le2R,\\
|B_{2R}(x_0)|/(t^{Q-p}h_\eta(t)),&t>2R.
\end{cases}
\]
Both expressions are bounded on their respective ranges. Hence
\[
\|\Mcal_p^\eta[\chi_{B_{2R}(x_0)}dx]\|_{L^\infty(\G)}
\le C_2(\G,p,\eta,R),
\]
and therefore
\[
\|\Mcal_p^\eta[\sigma]\|_{L^\infty(\G)}
\le \tau^{-1}\|\Mcal_p^\eta[\mu]\|_{L^\infty(\G)}+C_2
\le1+C_2.
\]
Since
$\mu(\G)=\mu(B_r(x_0))$ for every $r>R$,
the definition of the maximal potential gives
\[
\frac{\mu(\G)}{r^{Q-p}h_\eta(r)}
=\frac{\mu(B_r(x_0))}{r^{Q-p}h_\eta(r)}
\le \Mcal_p^\eta[\mu](x_0)
\le\tau.
\]
Letting $r\downarrow R$, we obtain
\[
\tau^{-1}\mu(\G)\le R^{Q-p}h_\eta(R).
\]
Since $\sigma=\tau^{-1}\mu+\chi_{B_{2R}(x_0)}dx$ and $\mu\ge0$,
\[
\begin{split}
|B_{2R}(x_0)|
\le \sigma(\G)
=\tau^{-1}\mu(\G)+|B_{2R}(x_0)|
\le R^{Q-p}h_\eta(R)+|B_{2R}(x_0)|.
\end{split}
\]
Thus $0<m_0\le\sigma(\G)\le M_0$, where
\[
m_0=|B_{2R}(x_0)|,
\quad
M_0=R^{Q-p}h_\eta(R)+|B_{2R}(x_0)|.
\]
Moreover,
$\supp\sigma\subset\overline B_{2R}(x_0)$.
All constants below depend only on
$\G,p,l,\alpha,\beta,R,C_*$ and are independent of $\tau$ and $\mu$.

We next use these bounds to obtain an integrable reaction density.
Fix $T>\max\{4R,1\}$ depending only on $R$.
Applying Corollary~\ref{cor:localexp} at this fixed scale, we obtain
constants $\delta_0>0$ and $C_0<\infty$ such that
\[
\int_{B_T(x_0)}
\exp\!\left(\delta_0[\W_{1,p}^{\infty}[\sigma]]^\beta\right)dx
\le C_0.
\]
Choose
\[
q=\frac{2Q}{p}>\frac Qp,\quad
\delta=\frac{\delta_0}{q},
\]
and set
\[
g=H_l\!\left(\delta[\W_{1,p}^{\infty}[\sigma]]^\beta\right).
\]
Since $H_l(t)\le e^t$ for $t\ge0$ and $q\delta=\delta_0$, we have
\[
\int_{B_T(x_0)}g^q\,dx
\le
\int_{B_T(x_0)}
\exp\!\left(\delta_0[\W_{1,p}^{\infty}[\sigma]]^\beta\right)dx
\le C_0.
\]
H\"older's inequality then gives
\[
\int_{B_T(x_0)}g\,dx
\le |B_T(x_0)|^{1-1/q}
\left(\int_{B_T(x_0)}g^q\,dx\right)^{1/q}
\le C.
\]
To estimate $g$ outside $B_T(x_0)$, put
\[
s=\frac1{p-1},\quad
a_0=(Q-p)s,\quad
k_0=l\beta a_0.
\]
The assumption $l\beta>\frac{Q(p-1)}{Q-p}$ gives $k_0>Q$.
For $\rho=d(y,x_0)\ge T$, the support of $\sigma$ is disjoint
from $B_t(y)$ whenever $t<\rho-2R$. Consequently,
\begin{equation}
\begin{aligned}
\W_{1,p}^{\infty}[\sigma](y)
&\le M_0^s\int_{\rho-2R}^{\infty}
t^{-a_0}\frac{\dd t}{t}
=\frac{M_0^s}{a_0}(\rho-2R)^{-a_0}
\le C\rho^{-a_0},\\
\W_{1,p}^{\infty}[\omega](y)&=\tau^s\W_{1,p}^{\infty}[\sigma](y)\le\frac{\tau^sM_0^s}{a_0}(\rho-2R)^{-a_0}\le C\tau^s\rho^{-a_0}.
\end{aligned}
\label{eq:sub-tail}
\end{equation}
In particular,
\[
\delta[\W_{1,p}^{\infty}[\sigma](y)]^\beta
\le CT^{-\beta a_0}.
\]
Using $H_l(t)\le t^le^t/l!$, we obtain
\[
\begin{aligned}
g(y)
\le \frac{\delta^l}{l!}
[\W_{1,p}^{\infty}[\sigma](y)]^{l\beta}
\exp\!\left(\delta[\W_{1,p}^{\infty}[\sigma](y)]^\beta\right)
\le C\rho^{-k_0},
\quad \rho\ge T.
\end{aligned}
\]
Since $k_0>Q$ and $q>1$, the ball volume formula gives,
for $m=1,q$,
\[
\int_{\G\setminus B_T(x_0)}g^m\,dx
\le C\int_T^\infty r^{Q-1-mk_0}\,\dd r
\le C.
\]
Combining this with the estimates on $B_T(x_0)$, we have
\[
\|g\|_{L^1(\G)}+\|g\|_{L^q(\G)}\le C.
\]
We now use these two bounds to estimate the Wolff potential of $g$.
H\"older's inequality gives
\[
\int_{B_t(x)}g\,dx
\le |B_t(x)|^{1-1/q}\|g\|_{L^q(\G)}
\le Ct^{Q(1-1/q)}.
\]
For large $t$, we also have
\[
\int_{B_t(x)}g\,dx\le\|g\|_{L^1(\G)}\le C.
\]
Using the first estimate for $0<t<1$ and the second for $t\ge1$,
we obtain
\[
\begin{aligned}
\W_{1,p}^{\infty}[g](x)
&=\int_0^\infty
\left(\frac{\int_{B_t(x)}g\,dx}{t^{Q-p}}\right)^s
\frac{\dd t}{t}\\
&\le C\int_0^1t^{(p-Q/q)s}\frac{\dd t}{t}
+C\int_1^\infty t^{-a_0}\frac{\dd t}{t}
\le C.
\end{aligned}
\]
Both integrals converge because $q>Q/p$ and $p<Q$. 

We also need the decay of this potential at infinity.
Suppose $\rho=d(x,x_0)\ge2T$.
If $t<\rho/2$ and $y\in B_t(x)$, then
\[
d(y,x_0)\ge d(x,x_0)-d(x,y)>\rho/2\ge T.
\]
The pointwise estimate for $g$ therefore gives
\[
\int_{B_t(x)}g\,dx\le C\rho^{-k_0}t^Q.
\]
For $t\ge\rho/2$, the nonnegativity of $g$ and the preceding
$L^1(\G)$ bound give
\[
\int_{B_t(x)}g(y)\,dy
\le\int_\G g(y)\,dy
=\|g\|_{L^1(\G)}
\le C.
\]
It follows that
\[
\begin{aligned}
\W_{1,p}^{\infty}[g](x)
&\le C\rho^{-k_0s}
\int_0^{\rho/2}t^{ps}\frac{\dd t}{t}
+C\int_{\rho/2}^\infty t^{-a_0}\frac{\dd t}{t}\\
&\le C\rho^{(p-k_0)s}+C\rho^{-a_0}
\le C\rho^{-a_0}.
\end{aligned}
\]
The last inequality follows from $k_0>Q$ and $\rho>1$.

To compare these estimates with $\W_{1,p}^{\infty}[\sigma]$,
we use the fixed background measure.
For any $x\in\G$, write $\rho=d(x,x_0)$.
If $t>\rho+2R$, then
$B_{2R}(x_0)\subset B_t(x)$, and hence
$\sigma(B_t(x))\ge m_0$. Thus
\[
\W_{1,p}^{\infty}[\sigma](x)
\ge m_0^s\int_{\rho+2R}^{\infty}t^{-a_0}\frac{\dd t}{t}
=\frac{m_0^s}{a_0}(\rho+2R)^{-a_0}.
\]
On $B_{2T}(x_0)$, this gives a positive uniform lower bound,
while $\W_{1,p}^{\infty}[g]$ is uniformly bounded above.
For $\rho\ge2T$, it gives
$\W_{1,p}^{\infty}[\sigma](x)\ge c\rho^{-a_0}$,
which has the same decay as the upper bound for
$\W_{1,p}^{\infty}[g]$. Therefore
\[
\W_{1,p}^{\infty}[g](x)
\le C\W_{1,p}^{\infty}[\sigma](x),
\quad x\in\G.
\]
It remains to choose $\tau$ so that the required reaction
satisfies the absorption estimate.
Put $A=2c_pC_*$ and
\[
f=H_l\!\left(\alpha[A\W_{1,p}^{\infty}[\omega]]^\beta\right).
\]
Since $\omega=\tau\sigma$, homogeneity gives
\[
\alpha[A\W_{1,p}^{\infty}[\omega]]^\beta
=\alpha A^\beta\tau^{\beta s}
[\W_{1,p}^{\infty}[\sigma]]^\beta.
\]
Set
\[
\theta=\frac{\alpha A^\beta}{\delta}\tau^{\beta s},
\]
and take $\tau$ small enough that $\theta\le1$.
For $t\ge0$ and $0\le\theta\le1$,
\[
H_l(\theta t)
=\sum_{j=l}^\infty\frac{\theta^jt^j}{j!}
\le\theta^l\sum_{j=l}^\infty\frac{t^j}{j!}
=\theta^lH_l(t).
\]
Applying this with
$t=\delta[\W_{1,p}^{\infty}[\sigma]]^\beta$, we obtain
$f\le\theta^lg$. In particular, $f\in L^1(\G)$.
Moreover,
\[
\begin{aligned}
\W_{1,p}^{\infty}[f]
&\le\theta^{ls}\W_{1,p}^{\infty}[g]
\le C\theta^{ls}\W_{1,p}^{\infty}[\sigma]\\
&=C\theta^{ls}\tau^{-s}\W_{1,p}^{\infty}[\omega]
=C\left(\frac{\alpha A^\beta}{\delta}\right)^{ls}
\tau^{\frac{l\beta-(p-1)}{(p-1)^2}}
\W_{1,p}^{\infty}[\omega].
\end{aligned}
\]
Since
\[
l\beta>\frac{Q(p-1)}{Q-p}>p-1,
\]
the exponent of $\tau$ is positive.
We may therefore decrease $\tau$ so that the last coefficient
is at most one. With $b=\varepsilon_*=\tau$, this proves
\eqref{eq:subabsorb} and the asserted $L^1(\G)$ property.
\end{proof}
Throughout the critical case, we assume that $Q>1$.
\begin{proposition}\label{prop:criticalBM}
 Let $\nu$ be a nonnegative finite Radon measure supported in a ball
 $B_R(x_0)$.  For every $0<\delta<1$ there is $C_\delta=C_\delta(\G,Q)$
 such that, for every $z\in\G$ and $0<r\le R$,
 \begin{equation}
 \int_{B_r(z)}
 \exp\!\left(
 \frac{Q(1-\delta)\W_{1,Q}^{2r}[\nu\restriction B_{2r}(z)]}
 {[\nu(B_{2r}(z))]^{1/(Q-1)}}
 \right)dx\le C_\delta r^Q.
 \label{eq:critical-BM}
 \end{equation}
 The quotient is understood as zero if $\nu(B_{2r}(z))=0$.
\end{proposition}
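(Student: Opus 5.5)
The estimate is a Brezis--Merle type inequality for the critical Wolff potential. Our plan is to normalize the measure, reduce the exponential of the Wolff potential to an average of exponentials of a logarithmic kernel by Jensen's inequality, and then integrate that kernel explicitly using the homogeneous ball volume \eqref{eq:ball-volume}. Put $\mu=\nu\restriction B_{2r}(z)$ and $M=\mu(\G)=\nu(B_{2r}(z))$, and assume $M>0$. By left invariance and the dilations \eqref{eq:cc-properties}, and since $\W_{1,Q}^{2r}$ is invariant under pushing forward by $\delta_{1/r}$ when $p=Q$, we may take $z=e$ and $r=1$. The bound $C_\delta r^Q$ then follows from $|B_r|=|B_1(e)|r^Q$. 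By homogeneity of degree $1/(Q-1)$ we may also assume $M=1$, so $\mu$ is a probability measure on $B_2(e)$.

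The key step is a pointwise representation. For $p=Q$, with $s=1/(Q-1)$,
\[
\W_{1,Q}^{2}[\mu](x)=\int_0^2\mu(B_t(x))^{s}\frac{\dd t}{t}.
\]
Since $0\le\mu(B_t(x))\le1$ and $s\le1$ only when $Q\ge2$, we do not use $\mu^s\le\mu$. Instead we argue by layer cake:
\[
\int_0^2\mu(B_t(x))\frac{\dd t}{t}=\int\log^+\frac{2}{d(x,y)}\,d\mu(y).
\]
The difficulty is that the exponent $s$ sits inside. The plan is to use Hölder in $t$ against a weight to reduce to this logarithmic kernel. The sharp constant $Q$ should come from the extremal case $\mu=\delta_y$, where
\[
\W_{1,Q}^2[\delta_y](x)=\log^+\frac{2}{d(x,y)},
\]
and $\exp(Q(1-\delta)\log(2/d))=(2/d)^{Q(1-\delta)}$ is integrable on $B_1$ precisely because $Q(1-\delta)<Q$.

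For general $\mu$, we would dyadically decompose $t\in(2^{-k},2^{-k+1}]$ and write
\[
\W\le\log 2\sum_{k\ge0}\mu(B_{2^{-k+1}}(x))^{s}.
\]
We then need an estimate showing that the exponential integral is maximized essentially by point masses. One concrete route is Hedberg's trick. Fix $x$, and for a threshold $k_0$ bound the first $k_0$ terms by $k_0\log2$ (since $\mu\le1$). For the tail $k>k_0$, use
\[
\mu(B_{2^{-k+1}}(x))^{s}\le\mu(B_{2^{-k_0+1}}(x))^{s-1}\mu(B_{2^{-k+1}}(x))\quad\text{if }s\le1,
\]
or a converse bound if $s>1$. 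This yields
\[
\W\le k_0\log2+\epsilon_{k_0}(x)\int\log^+\frac{2^{-k_0+2}}{d(x,y)}\,d\mu.
\]

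Then we apply Jensen with respect to the probability measure $\mu$:
\[
\int_{B_1}\exp\Bigl(Q(1-\delta)\int\log^+\frac2{d(x,y)}\,d\mu(y)\Bigr)dx\le\int\!\!\int_{B_1}\Bigl(1\vee\frac2{d(x,y)}\Bigr)^{Q(1-\delta)}dx\,d\mu(y)\le C_\delta.
\]
The last integral is computed from \eqref{eq:ball-volume} by layer cake and is bounded independently of $y$.

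The main obstacle is that $\W_{1,Q}$ is not linear in $\mu$ when $Q\ne2$, so Jensen cannot be applied directly. We must show that
\[
\W_{1,Q}^2[\mu](x)\le\int\log^+\frac{2}{d(x,y)}\,d\mu(y)\cdot(\text{factor}\le1)+\text{bounded error}
\]
in a form that does not lose the constant $Q$.

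If this fails, the fallback is a distributional argument. Show directly that
\[
|\{x\in B_1:\W_{1,Q}^2[\mu](x)>\lambda\}|\le C_\delta e^{-Q(1-\delta/2)\lambda}.
\]
To get this, cover the superlevel set using the fact that $\W>\lambda$ forces, for some scale $t$, a mass concentration $\mu(B_t(x))\ge$ of order $1$ with $t\lesssim e^{-\lambda}$ in the log-averaged sense. A Vitali covering then gives total volume at most about $e^{-Q\lambda}$, up to the $\delta$ losses. Integrating this distribution bound against $Q(1-\delta)e^{Q(1-\delta)\lambda}$ gives \eqref{eq:critical-BM}.
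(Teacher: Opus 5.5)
There is a genuine gap: the main route cannot work, and the fallback leaves out the step that actually carries the proof.

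\textbf{The reduction to the log kernel goes the wrong way.} Since $Q>1$ is an integer, $s=1/(Q-1)\le1$, and $s<1$ whenever $Q\ge3$ (every non-abelian Carnot group). For a probability measure, $0\le\mu(B_t(x))\le1$, so $\mu(B_t(x))^s\ge\mu(B_t(x))$ and
\[
\W_{1,Q}^2[\mu](x)\ge\int_0^2\mu(B_t(x))\frac{\dd t}{t}=\int\log^+\frac{2}{d(x,y)}\,d\mu(y).
\]
The Wolff potential dominates the logarithmic potential, so your Jensen/Brezis--Merle computation bounds the wrong quantity.

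Your Hedberg-type inequality $\mu(B_{2^{-k+1}}(x))^{s}\le\mu(B_{2^{-k_0+1}}(x))^{s-1}\mu(B_{2^{-k+1}}(x))$ for $s\le1$ is reversed. For $k>k_0$ the smaller ball has smaller mass and $s-1\le0$, so $\mu(B_{2^{-k+1}}(x))^{s-1}\ge\mu(B_{2^{-k_0+1}}(x))^{s-1}$. The correct direction would put a factor $\mu(B_{2^{-k_0+1}}(x))^{s-1}\ge1$, unbounded in $x$ and $\mu$, in front of the log kernel. That destroys the constant $Q$. So the ``main obstacle'' you name cannot be resolved in that form, except when $Q=2$ and $\W$ is linear.

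\textbf{The fallback points the right way, but its key implication is missing.} The mechanism you describe is also not right: $\W>\lambda$ does not force mass of order one in a ball of radius $\lesssim e^{-\lambda}$, uniformly in $\mu$. Take $n$ well-separated atoms of mass $1/n$: $\W$ is large near each atom, yet every sufficiently small ball carries mass at most $1/n$. What is forced is a \emph{density} concentration.

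Keep your split of the $t$-integral into large and small scales. Control the small scales by the centered maximal function $\mathcal A\mu(x)=\sup_{t>0}\mu(B_t(x))/t^Q$ instead of a log kernel. Using $\mu(B_t(x))\le\min\{1,\mathcal A\mu(x)t^Q\}$ and splitting at $t=\mathcal A\mu(x)^{-1/Q}$ (when this is $<2$) gives
\[
\W_{1,Q}^2[\mu](x)\le\frac{Q-1}{Q}+\frac1Q\log^+\bigl(2^Q\mathcal A\mu(x)\bigr),
\]
hence $e^{Q(1-\delta)\W_{1,Q}^2[\mu]}\le e^{(1-\delta)(Q-1)}\max\{1,2^Q\mathcal A\mu\}^{1-\delta}$. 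The weak type $(1,1)$ bound $|\{\mathcal A\mu>\lambda\}|\le C_\G/\lambda$ follows from the Vitali covering lemma and $\mu(\G)=1$. By the layer cake formula it makes the right side integrable on $B_1(e)$, with bound $|B_1(e)|+C(1-\delta)/\delta$. This is exactly the paper's argument.

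Equivalently, for $\lambda>(Q-1)/Q$ one has $\{\W_{1,Q}^2[\mu]>\lambda\}\subset\{2^Q\mathcal A\mu>e^{Q\lambda-(Q-1)}\}$. This gives your distributional bound with $e^{-Q\lambda}$ and no $\delta$ loss. The Vitali step works because the selected disjoint balls have total mass at most one, not because each has mass of order one.

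Your normalizations (translation, dilation, unit mass) are correct and match the paper.
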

\begin{proof}
We first prove the estimate on $B_1(e)$ for a probability measure
$\sigma$ on $\G$. Put $s=1/(Q-1)$.
To estimate its Wolff potential, define
\[
\mathcal A_\sigma(x)
=\sup_{t>0}\frac{\sigma(B_t(x))}{t^Q}.
\]
The $5r$ covering lemma, the ball volume formula and
$\sigma(\G)=1$ give
\[
|\{x\in\G:\mathcal A_\sigma(x)>\lambda\}|
\le\frac{C_\G}{\lambda},
\quad \lambda>0.
\]
In particular, $\mathcal A_\sigma(x)<\infty$ almost everywhere.
At each such point, we have
\[
\sigma(B_t(x))
\le1,
\quad
\sigma(B_t(x))
\le\mathcal A_\sigma(x)t^Q.
\]
If $\mathcal A_\sigma(x)\le2^{-Q}$, then
\[
\begin{aligned}
\W_{1,Q}^{2}[\sigma](x)
&=\int_0^2\sigma(B_t(x))^s\frac{\dd t}{t}
\le\mathcal A_\sigma(x)^s
\int_0^2t^{Qs}\frac{\dd t}{t}\\
&=\frac{(2^Q\mathcal A_\sigma(x))^s}{Qs}
\le\frac1{Qs}.
\end{aligned}
\]
If $\mathcal A_\sigma(x)>2^{-Q}$, we split the integral at
$t=\mathcal A_\sigma(x)^{-1/Q}<2$ and obtain
\[
\begin{aligned}
\W_{1,Q}^{2}[\sigma](x)
&\le\mathcal A_\sigma(x)^s
\int_0^{\mathcal A_\sigma(x)^{-1/Q}}
t^{Qs}\frac{\dd t}{t}
+\int_{\mathcal A_\sigma(x)^{-1/Q}}^2\frac{\dd t}{t}\\
&=\frac1{Qs}
+\frac1Q\log\!\left(2^Q\mathcal A_\sigma(x)\right).
\end{aligned}
\]
Thus, in both cases,
\[
\exp\!\left(Q(1-\delta)\W_{1,Q}^{2}[\sigma](x)\right)
\le e^{(1-\delta)/s}
\max\{1,2^Q\mathcal A_\sigma(x)\}^{1-\delta}.
\]
We next integrate this bound over $B_1(e)$.
The weak estimate for $\mathcal A_\sigma$ gives
\[
|\{x\in B_1(e):2^Q\mathcal A_\sigma(x)>\lambda\}|
\le\frac{C_\G}{\lambda},
\quad \lambda\ge1.
\]
Applying the layer cake formula and splitting the integral at $\lambda=1$, we obtain
\[
\begin{aligned}
&\int_{B_1(e)}
\max\{1,2^Q\mathcal A_\sigma(x)\}^{1-\delta}\,dx\\
&\quad=(1-\delta)\int_0^\infty
\lambda^{-\delta}
|\{x\in B_1(e):
\max\{1,2^Q\mathcal A_\sigma(x)\}>\lambda\}|
\,\dd\lambda\\
&\quad=(1-\delta)|B_1(e)|
\int_0^1\lambda^{-\delta}\,\dd\lambda\\
&\quad\quad
+(1-\delta)\int_1^\infty
\lambda^{-\delta}
|\{x\in B_1(e):2^Q\mathcal A_\sigma(x)>\lambda\}|
\,\dd\lambda\\
&\quad=|B_1(e)|
+(1-\delta)\int_1^\infty
\lambda^{-\delta}
|\{x\in B_1(e):2^Q\mathcal A_\sigma(x)>\lambda\}|
\,\dd\lambda\\
&\quad\le |B_1(e)|
+C_\G(1-\delta)\int_1^\infty
\lambda^{-1-\delta}\,\dd\lambda\\
&\quad=|B_1(e)|+\frac{C_\G(1-\delta)}{\delta}.
\end{aligned}
\]
Combining the two estimates, we obtain
\begin{equation}
\int_{B_1(e)}
\exp\!\left(Q(1-\delta)\W_{1,Q}^{2}[\sigma](\xi)\right)d\xi
\le C_\delta.
\label{eq:normalized-critical-BM}
\end{equation}
We now obtain the estimate on $B_r(z)$ by normalizing the mass
and rescaling the ball. Set
\[
m=\nu(B_{2r}(z)),
\quad
\nu_{z,r}=\nu\restriction B_{2r}(z).
\]
If $m=0$, then $\nu_{z,r}=0$. In this case, the exponential term
in \eqref{eq:critical-BM} is understood to be $1$.
The integral then equals $|B_r(z)|=|B_1(e)|r^Q$
and the conclusion follows from \eqref{eq:ball-volume}.
Then we can assume that $m>0$. Define
\[
T_{z,r}(y)=\delta_{1/r}(z^{-1}\circ y),
\quad
\sigma=m^{-1}(T_{z,r})_\#\nu_{z,r}.
\]
Then $\sigma$ is a probability measure concentrated in $B_2(e)$.
Write
\[
\xi=T_{z,r}(x),
\quad x=z\circ\delta_r(\xi).
\]
By the left invariance and homogeneity of the distance,
\[
T_{z,r}(B_t(x))=B_{t/r}(\xi).
\]
The definition of the pushforward measure therefore gives
\[
\nu_{z,r}(B_t(x))
=m\,\sigma(B_{t/r}(\xi)).
\]
Substituting this identity into the Wolff potential and then
setting $t=r\tau$, we obtain
\begin{equation}
\begin{aligned}
\W_{1,Q}^{2r}[\nu_{z,r}](z\circ\delta_r(\xi))
&=m^s\int_0^{2r}
\sigma(B_{t/r}(\xi))^s\frac{\dd t}{t}\\
&=m^s\int_0^2
\sigma(B_\tau(\xi))^s\frac{\dd\tau}{\tau}\\
&=m^{1/(Q-1)}\W_{1,Q}^{2}[\sigma](\xi).
\end{aligned}
\label{eq:critical-scaling-potential}
\end{equation}
Under the change of variables $x=z\circ\delta_r(\xi)$,
the ball $B_r(z)$ becomes $B_1(e)$ and $dx=r^Qd\xi$.
Applying \eqref{eq:normalized-critical-BM} and
\eqref{eq:critical-scaling-potential}, we obtain
\[
\begin{aligned}
&\int_{B_r(z)}
\exp\!\left(
\frac{Q(1-\delta)\W_{1,Q}^{2r}[\nu_{z,r}](x)}
{m^{1/(Q-1)}}
\right)dx\\
&\quad=r^Q\int_{B_1(e)}
\exp\!\left(Q(1-\delta)\W_{1,Q}^{2}[\sigma](\xi)\right)d\xi\\
&\quad\le C_\delta r^Q.
\end{aligned}
\]
This proves \eqref{eq:critical-BM}.
\end{proof}
\begin{proposition}\label{prop:criticalabsorb}
  Let $\Omega\Subset\G$, $D=\diam\Omega$, and
  $B_{10D}(x_0)\supset\Omega$.  If $l>Q-1$, there are
  $
   b=b(\G,Q,l,\alpha,D)>0,
   \varepsilon_0=\varepsilon_0(\G,Q,l,\alpha,D)>0,
  $
  such that if $\mu$ is a nonnegative finite Radon measure
in $\Omega$ satisfying
\[
\mu(\Omega)\le\varepsilon_0,
\]
then
 \begin{equation}
  \W_{1,Q}^{2D}\!\left[
   H_l\!\left(2\alpha C_*\W_{1,Q}^{2D}[\omega]\right)\chi_\Omega dx
  \right]
 \le\W_{1,Q}^{2D}[\omega]
 \quad\text{in }\Omega,
 \label{eq:criticalabsorb}
 \end{equation}
  where $\omega=\mu+b\chi_{B_{10D}(x_0)}dx$ and the density is extended by
  zero outside $\Omega$.  It belongs to $L^1(\Omega)$.
\end{proposition}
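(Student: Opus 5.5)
We follow the scheme of Proposition~\ref{prop:subabsorb}, with Corollary~\ref{cor:localexp} replaced by the Brezis--Merle type bound of Proposition~\ref{prop:criticalBM}. Set $s=1/(Q-1)$ and $\omega=\mu+b\chi_{B_{10D}(x_0)}dx$, where $b$ and $\varepsilon_0$ are small parameters fixed at the end.

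\emph{Step 1: pointwise splitting of $\W_{1,Q}^{2D}[\omega]$ on $\Omega$.} For $x\in\Omega$ and $t\le 2D$ we have $B_t(x)\subset B_{10D}(x_0)$. By \eqref{eq:Wsubadditive},
\[
\W_{1,Q}^{2D}[\omega]\le c_Q\bigl(\W_{1,Q}^{2D}[\mu]+\W_{1,Q}^{2D}[b\,dx]\bigr).
\]
The background term equals a constant $b^sC_1(D)$. The point is that we need the \emph{sharp} constant in front of $\W_{1,Q}^{2D}[\mu]$, so $c_Q$ should be avoided there. Instead we use $(a+c)^s\le a^s+c^s$ when $s\le1$, i.e.\ $Q\ge2$; otherwise we use $(a+c)^s\le(1+\kappa)a^s+C_\kappa c^s$. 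This gives
\[
\W_{1,Q}^{2D}[\omega]\le(1+\kappa)\W_{1,Q}^{2D}[\mu]+C_\kappa b^s .
\]

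\emph{Step 2: exponential integrability with the right exponent.} Cover $\Omega$ by a single ball $B_r(z)$ with $z\in\Omega$ and $r=D$ (adjusting the radius if necessary), so that $\mu\restriction B_{2r}(z)=\mu$. Proposition~\ref{prop:criticalBM} then gives
\[
\int_\Omega\exp\bigl(Q(1-\delta)\,\W_{1,Q}^{2D}[\mu]/\mu(\Omega)^s\bigr)\,dx\le C_\delta D^Q .
\]
Take $\delta=1/2$ and let $\gamma=2\alpha C_*(1+\kappa)q$ with $q=2$. If $\mu(\Omega)\le\varepsilon_0$ with $\gamma\varepsilon_0^s\le Q/2$, Hölder's inequality yields
\[
F:=H_l\bigl(2\alpha C_*\W_{1,Q}^{2D}[\omega]\bigr)\in L^q(\Omega),
\]
with a bound depending only on $\G,Q,l,\alpha,D$. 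In particular $F\in L^1(\Omega)$.

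\emph{Step 3: smallness through the $l$-th order vanishing of $H_l$.} This is the main obstacle. In the subcritical case we bounded $\W[g]$ by a multiple of $\W[\sigma]$ and used homogeneity in $\tau$. Here the two smallness parameters enter differently: $\mu(\Omega)$ is small but $\W[\mu]$ can be large pointwise. The plan is to use
\[
H_l(t)\le \frac{t^l}{l!}e^t\quad\text{and}\quad H_l(\theta t)\le\theta^lH_l(t)\ (\theta\le1).
\]
Write $u_0=2\alpha C_*\W_{1,Q}^{2D}[\omega]$. By Hölder's inequality with exponents $q_1$ (on $u_0^{l}$) and $q_2$ (on $e^{u_0}$), together with Step~2 and the bound $\int_\Omega(\W[\mu])^{k}\le C\,\mu(\Omega)^{ks}$ (which follows from the same exponential bound), we get
\[
\|F\|_{L^{q'}(\Omega)}\le C\bigl(\mu(\Omega)^{ls}+b^{ls}\bigr)
\]
for some $q'>1$. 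Then, as in Proposition~\ref{prop:subabsorb}, Hölder's inequality gives
\[
\int_{B_t(x)}F\le C\,t^{Q/q''}\|F\|_{L^{q'}},\qquad q''=(q')'<\infty,
\]
so the $t$-integral converges at $0$ and
\[
\W_{1,Q}^{2D}[F\chi_\Omega](x)\le C\bigl(\mu(\Omega)^{ls}+b^{ls}\bigr)^{s}.
\]

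\emph{Step 4: comparison with a lower bound.} For $x\in\Omega$ and $t\le2D$,
\[
\W_{1,Q}^{2D}[\omega](x)\ge\W_{1,Q}^{2D}[b\,dx](x)=c_0b^s ,
\]
so it suffices to ensure $C(\varepsilon_0^{ls}+b^{ls})^s\le c_0b^s$. Since $l>Q-1$ means $ls>1$, we get $b^{ls^2}\ll b^s$ for small $b$, which handles the $b$-term. We then choose $\varepsilon_0$ with $\varepsilon_0^{ls}\le b^{ls}$, i.e.\ $\varepsilon_0\le b$, which also satisfies the condition in Step~2. Thus $b$ is fixed first (small), and then $\varepsilon_0=\min\{b,\dots\}$.

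The delicate points are keeping the constant $1+\kappa$ in Step~1 close enough to $1$ that $\gamma\varepsilon_0^s<Q(1-\delta)$ holds, and justifying the polynomial-moment bound on $\W[\mu]$ in Step~3. That bound is the expected obstacle, and we would derive it from the $\exp$ bound by the elementary inequality $y^k\le C_k e^{y}$ applied to $y=Q\W[\mu]/(2\mu(\Omega)^s)$.
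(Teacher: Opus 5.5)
Your argument is correct and uses the same ingredients as the paper's proof:
\begin{itemize}
\item Proposition~\ref{prop:criticalBM} at the fixed value $\delta=1/2$;
\item the bound $H_l(t)\le t^le^t/l!$, which extracts a factor (mass)$^{ls}$ in an $L^q$ norm of the reaction;
\item H\"older's inequality, which bounds its Wolff potential by (mass)$^{ls^2}$;
\item a lower bound of order (mass)$^{s}$ for $\W_{1,Q}^{2D}[\omega]$ on $\Omega$;
\item $ls>1$ to close.
\end{itemize}

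The bookkeeping differs. The paper never splits $\omega$. It normalizes by the total mass $M=\omega(\G)=\mu(\Omega)+b|B_{10D}(x_0)|$ and applies Proposition~\ref{prop:criticalBM} to $\omega$ itself. It then compares $\W_{1,Q}^{2D}[f]\le CM^{ls^2}$ with $W\ge cM^s$. That lower bound combines $W\ge(\log 2)\mu(\Omega)^s$ (since $\Omega\subset B_t(x)$ for $D<t<2D$) with the background bound. The result is the single factor $M^{s(ls-1)}$, so $b$ and $\varepsilon_0$ can be chosen independently.

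You instead peel off the background as an additive constant $Cb^s$, normalize only by $\mu(\Omega)$, and use only the background lower bound. This forces the coupling $\varepsilon_0\le b$. That is legitimate, since the statement only asks for some admissible pair $(b,\varepsilon_0)$. The cost is an extra splitting step and a two-stage choice of parameters; the gain is that you never need the $\mu$-dependent lower bound $(\log 2)\mu(\Omega)^s$.

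Two remarks.

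First, the claim in Step~1 that $B_t(x)\subset B_{10D}(x_0)$ for $x\in\Omega$ and $t\le 2D$ is false in general. Only $\Omega\subset B_{10D}(x_0)$ is assumed, and $\Omega$ may lie near $\partial B_{10D}(x_0)$. Your upper bound survives by monotonicity. The lower bound $c_0b^s$ in Step~4, however, needs the geodesic argument from the paper: for $x\in B_{10D}(x_0)$ and $0<t\le 2D$ one has $|B_t(x)\cap B_{10D}(x_0)|\ge|B_{t/2}(e)|$. This still gives $\W_{1,Q}^{2D}[\omega](x)\ge c(\G,Q,D)\,b^s$ on $\Omega$.

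Second, the ``delicate points'' you list are not delicate. $Q$ is an integer larger than $1$, so $s\le 1$ and $c_Q=1$, and no $(1+\kappa)$ is needed. In any case, a fixed constant in front of $\W_{1,Q}^{2D}[\mu]$ is absorbed by shrinking $\varepsilon_0$, because Proposition~\ref{prop:criticalBM} is used at the fixed $\delta=1/2$. Your moment bound via $y^k\le C_ke^y$ is exactly the paper's use of $\sup_{v\ge0}v^{2l}e^{-a_1v/2}<\infty$.
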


\begin{proof}
 Put
 \[
 s=\frac1{Q-1},\quad a=2\alpha C_*,\quad
 M=\omega(\G),\quad W=\W_{1,Q}^{2D}[\omega],\quad
 f=H_l(aW)\chi_\Omega.
 \]
 The case $M=0$ is immediate.  Suppose $M>0$ and set $V=W/M^s$.
 Apply Proposition~\ref{prop:criticalBM} with $\delta=1/2$ to the fixed
 integration ball $B_{10D}(x_0)$, taking $B_{11D}(x_0)$ as a support ball.
 Since $\omega\restriction B_{20D}(x_0)=\omega$ and
 $\W_{1,Q}^{2D}[\omega]\le\W_{1,Q}^{20D}[\omega]$, this gives
 \begin{equation}
 \int_\Omega e^{a_1V}\,dx\le C_\G D^Q,
 \quad a_1=\frac Q2.
 \label{eq:critical-fixed-exponential}
 \end{equation}
 Here and below, $C$ depends only on $\G,Q,l,\alpha$
and is independent of $D$. We first prove an $L^2$ bound for $f$. Since
$H_l(t)\le t^l e^t/l!$ for $t\ge0$, we have
\[
\int_\Omega f^2\,dx
\le \frac{a^{2l}M^{2ls}}{(l!)^2}
\int_\Omega V^{2l}e^{2aM^sV}\,dx.
\]
To apply the preceding exponential estimate, choose $M$ small
enough that
\begin{equation}
2aM^s\le\frac{a_1}{2}.
\label{eq:critical-mass-choice}
\end{equation}
Then
\[
V^{2l}e^{2aM^sV}
\le V^{2l}e^{a_1V/2}
=\bigl(V^{2l}e^{-a_1V/2}\bigr)e^{a_1V}.
\]
Since
\[
\sup_{v\ge0}v^{2l}e^{-a_1v/2}
=\left(\frac{4l}{e a_1}\right)^{2l}<\infty,
\]
it follows that
\[
V^{2l}e^{2aM^sV}\le C e^{a_1V}.
\]
Combining these estimates with the preceding bound for
$\int_\Omega e^{a_1V}\,dx$, we obtain
\begin{equation}
\|f\|_{L^2(\Omega)}^2
\le C M^{2ls}\int_\Omega e^{a_1V}\,dx
\le C D^Q M^{2ls}.
\label{eq:critical-density-L2}
\end{equation}
 We next estimate the Wolff potential of $f$. Extend $f$ by zero
outside $\Omega$. For any $x\in\G$ and $t>0$, H\"older's inequality gives
\[
\int_{B_t(x)}f\,dx
\le |B_t(x)|^{1/2}
\left(\int_{B_t(x)}f^2\,dx\right)^{1/2}
\le |B_1(e)|^{1/2}t^{Q/2}\|f\|_{L^2(\Omega)}.
\]
It follows that
\begin{equation}
\begin{split}
\W_{1,Q}^{2D}[f](x)
&=\int_0^{2D}
\left(\int_{B_t(x)}f\,dx\right)^s\frac{\dd t}{t}
\le C\|f\|_{L^2(\Omega)}^s
\int_0^{2D}t^{Qs/2}\frac{\dd t}{t}\\
&\le C D^{Qs/2}\|f\|_{L^2(\Omega)}^s
\le C D^{Qs}M^{ls^2},
\end{split}
\label{eq:critical-reaction-potential}
\end{equation}
where the last inequality follows from
\eqref{eq:critical-density-L2}. This estimate holds for every $x\in\G$. To compare this with $W$, write $m=\mu(\Omega)$ and
 $B=b|B_{10D}(x_0)|$, so that $M=m+B$.  If $x\in\Omega$ and
 $D<t<2D$, then $\Omega\subset B_t(x)$, then
 \[
 W(x)\ge\int_D^{2D}m^s\frac{\dd t}{t}=(\log2)m^s.
 \]
 Also, for $x\in B_{10D}(x_0)$ and $0<t\le2D$, the geodesic property
 implies
 \[
 |B_t(x)\cap B_{10D}(x_0)|\ge |B_{t/2}(e)|.
 \]
 Indeed, if $d(x,x_0)\ge t/2$, choose $y$ at distance $t/2$ from $x$
 along a minimizing geodesic to $x_0$; otherwise take $y=x_0$.
 In either case, $B_{t/2}(y)\subset B_t(x)\cap B_{10D}(x_0)$.
 Since
\[
\omega(B_t(x))=\mu(B_t(x))
+b|B_t(x)\cap B_{10D}(x_0)|\ge b|B_t(x)\cap B_{10D}(x_0)|,
\]
the preceding volume estimate gives, for $x\in\Omega$,
\[
\begin{split}
W(x)
&=\int_0^{2D}\omega(B_t(x))^s\frac{\dd t}{t}
\ge b^s\int_0^{2D}
|B_t(x)\cap B_{10D}(x_0)|^s\frac{\dd t}{t}\\
&\ge b^s|B_1(e)|^s2^{-Qs}
\int_0^{2D}t^{Qs}\frac{\dd t}{t}
=\frac{10^{-Qs}}{Qs}\,B^s.
\end{split}
\]
 Combining these two lower bounds and using
 $\max\{m,B\}\ge M/2$ yields $W(x)\ge cM^s$ on $\Omega$, with
 $c>0$ depending only on $Q$.  It follows from
 \eqref{eq:critical-reaction-potential} that
 \begin{equation}
 \W_{1,Q}^{2D}[f](x)
 \le C D^{Q/(Q-1)}
 M^{\frac{l-Q+1}{(Q-1)^2}}W(x),\quad x\in\Omega.
 \label{eq:critical-absorption-factor}
 \end{equation}
 The exponent of $M$ is positive because $l>Q-1$.
 Fix a constant $M_*>0$, depending only on $\G,Q,l,\alpha,D$, so that
 \eqref{eq:critical-mass-choice} holds and the coefficient in
 \eqref{eq:critical-absorption-factor} is at most $1$ whenever
 $M\le M_*$.  Choose $b>0$ first so that
 $b|B_{10D}(x_0)|\le M_*/2$, and then choose
 $0<\varepsilon_0\le M_*/2$.  Thus
 \[
 M\le\varepsilon_0+b|B_{10D}(x_0)|\le M_*,
 \]
 proving \eqref{eq:criticalabsorb} and the required integrability.
\end{proof}

\section{Iteration on bounded open sets}

We now prove the iteration lemma.

\begin{lemma}\label{lem:iteration}
 Let $D\Subset\G$ be a bounded open set, let $\nu$ be a nonnegative finite measure in
 $D$, and suppose that a nonnegative measure $\omega$ satisfies
 $\nu\le\omega$ and
 \begin{equation}
 \W_{1,p}^{2\diam D}\!\left[
  P_{l,\alpha,\beta}\bigl(2c_pC_*\W_{1,p}^\infty[\omega]\chi_D\bigr)
 \right]
 \le\W_{1,p}^\infty[\omega]\quad\text{in }D.
 \label{eq:iteration-hyp}
 \end{equation}
 Assume also that
\begin{equation}
P_{l,\alpha,\beta}
(2c_pC_*\W_{1,p}^{\infty}[\omega]\chi_D)
\in L^1(D).
\label{integrability condition}
\end{equation} 
Then the Dirichlet problem
 \begin{equation}
 \begin{cases}
  -\Delta_{\G,p}v=P_{l,\alpha,\beta}(v)+\nu&\text{in }D,\\
 v=0&\text{on }\partial D
 \end{cases}
 \label{eq:boundediteration}
 \end{equation}
 has a nonnegative solution satisfying
 \begin{equation}
  v\le2c_pC_*\W_{1,p}^\infty[\omega]\quad\text{in }D.
 \label{eq:iterationbound}
 \end{equation}
 For $p=Q$ and $\beta=1$, the lemma remains true if $\W_{1,p}^\infty[\omega]$ in \eqref{eq:iteration-hyp}--\eqref{eq:iterationbound}
 is replaced by $\W_{1,Q}^{2\diam D}[\omega]$ and $c_p$ is replaced by $1$.
\end{lemma}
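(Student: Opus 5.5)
We construct $v$ as the limit of a monotone iteration in the style of \cite{PVTAMS,NGARMA2014}. Set $\overline W=2c_pC_*\W_{1,p}^\infty[\omega]$ and $F=P_{l,\alpha,\beta}(\overline W\chi_D)$. By \eqref{integrability condition}, $F\in L^1(D)$. Start with $v_0$, the solution of \eqref{eq:linear-dirichlet} with datum $\nu$. For $k\ge1$, let $v_k$ be the solution of \eqref{eq:linear-dirichlet} with datum $P_{l,\alpha,\beta}(v_{k-1})+\nu$. Each datum is a finite measure on $D$ once we know $0\le v_{k-1}\le\overline W$, because $P_{l,\alpha,\beta}$ is nondecreasing and then $P_{l,\alpha,\beta}(v_{k-1})\le F$.

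The first step is the uniform upper bound, proved by induction. By \eqref{eq:linear-wolff}, $\nu\le\omega$, and monotonicity of the Wolff potential in both the measure and the radius, $v_0\le C_*\W_{1,p}^{2\diam D}[\nu]\le C_*\W_{1,p}^\infty[\omega]\le\overline W$. If $v_{k-1}\le\overline W$, then \eqref{eq:linear-wolff}, \eqref{eq:Wsubadditive} and \eqref{eq:iteration-hyp} give
\[
v_k\le C_*c_p\bigl(\W_{1,p}^{2\diam D}[F]+\W_{1,p}^{2\diam D}[\nu]\bigr)\le C_*c_p\bigl(\W_{1,p}^\infty[\omega]+\W_{1,p}^\infty[\omega]\bigr)=\overline W
\]
in $D$. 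In the critical variant, the sum $\W_{1,Q}^{2\diam D}[F]+\W_{1,Q}^{2\diam D}[\nu]$ with the same upper bound $\W_{1,Q}^{2\diam D}[\omega]$ still gives $2C_*\W_{1,Q}^{2\diam D}[\omega]$. Here $c_p$ may be dropped because $p=Q\ge2$ (recall $Q>1$ and $Q$ is an integer), so $c_Q=1$.

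The second step is monotonicity, $v_{k-1}\le v_k$, which I would get from the comparison principle for renormalized solutions. Since $P_{l,\alpha,\beta}(v_{k-1})\ge P_{l,\alpha,\beta}(v_{k-2})$ inductively, and $v_0\le v_1$ because the datum only increases, we need $\mu_1\le\mu_2$ to imply $v[\mu_1]\le v[\mu_2]$. Comparison is not automatic for general measures. So, following \cite{PVTAMS}, I would run the iteration on the regularized (mollified) level, where the data are $L^1\cap$ smooth and weak comparison in $S^{1,p}_0(D)$ holds, and keep the bound $\le\overline W$ there, since \eqref{eq:linear-wolff} and the potential inequalities are stable under mollification of $\nu$ up to the factor built in. Alternatively one uses the fact that the regularized construction of \cite{PVTAMS} produces monotone iterates dominated by the Wolff bound.

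The third step passes to the limit. Set $v=\lim v_k\le\overline W$. Then $P_{l,\alpha,\beta}(v_k)\uparrow P_{l,\alpha,\beta}(v)$ in $L^1(D)$ by monotone convergence with dominating function $F$. The data $P_{l,\alpha,\beta}(v_{k-1})+\nu$ therefore converge strongly in total variation to $P_{l,\alpha,\beta}(v)+\nu$. The $v_k$ are $p$-superharmonic and increase, so $v$ is $p$-superharmonic (it is finite a.e. because of the bound). The weak continuity theorem of Trudinger--Wang \cite{TW} then gives $-\Delta_{\G,p}v=P_{l,\alpha,\beta}(v)+\nu$. The truncations satisfy $v\wedge k\in S_0^{1,p}(D)$ because the energy bounds $\int|X(v_j\wedge k)|^p\le k\,(\text{total mass})$ are uniform in $j$ and the total masses are bounded by $\|F\|_{L^1}+\nu(D)$. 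The main obstacle I expect is the second step: justifying comparison (monotonicity) for measure data and keeping the Wolff bound through the regularization. That is where I would lean on the regularized monotone construction of \cite{PVTAMS}.
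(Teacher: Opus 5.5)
Your overall scheme is the paper's: the induction $v_k\le\overline W$ via \eqref{eq:linear-wolff}, \eqref{eq:Wsubadditive} and \eqref{eq:iteration-hyp}, the passage to the limit by weak continuity, and uniform truncation energies. But the monotonicity step is not actually supplied, and the mechanism you mainly propose for it would not close.

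\textbf{Why your mechanism fails.} If you run the whole iteration at the mollified level and ``keep the bound $\le\overline W$ there'', you need $v_{k,n}\le\overline W$ for every $n$. That amounts to a bound like $\W_{1,p}^{2\diam D}[\rho_n*\mu]\le\W_{1,p}^{\infty}[\omega]$ whenever $\W_{1,p}^{2\diam D}[\mu]\le\W_{1,p}^{\infty}[\omega]$. Mollification does not preserve pointwise Wolff bounds with constant one: it moves mass to smaller radii around points near $\supp\mu$. At best you get such a bound up to a multiplicative constant and an enlarged radius. There is no ``factor built in'' to absorb that loss. The right side of \eqref{eq:iteration-hyp} has constant $1$, and the factor $2$ in $2c_pC_*$ is entirely consumed by splitting the datum into the reaction part and $\nu$. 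A loss $C>1$ gives only $v_{k,n}\le C\overline W$, and nothing in the hypotheses controls $P_{l,\alpha,\beta}(C\overline W)$, so the induction breaks. Your alternative sentence, that the construction of \cite{PVTAMS} ``produces monotone iterates dominated by the Wolff bound'', is the needed statement, but you assert it rather than prove it.

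\textbf{The missing idea: regularize only to compare, and apply the Wolff bound only to the limits.} This is what the paper does.
\begin{itemize}
\item Each $v_k$ is obtained from \cite[Theorem~4.1]{PVTAMS} together with approximants $v_{k,n}\in S_0^{1,p}(D)$ solving $-\Delta_{\G,p}v_{k,n}=\rho_n*\mu_k$. Here $\mu_k=P_{l,\alpha,\beta}(v_{k-1})\chi_D\,dx+\nu$ is built from the already constructed limit $v_{k-1}$, for which the bound $v_{k-1}\le\overline W$ is known.
\item The inductive hypothesis $v_{k-2}\le v_{k-1}$ gives $\mu_{k-1}\le\mu_k$, hence $\rho_n*\mu_{k-1}\le\rho_n*\mu_k$.
\item Testing both regularized equations with $\min\{v_{k,n}-v_{k-1,n},0\}\in S_0^{1,p}(D)$ gives $v_{k-1,n}\le v_{k,n}$.
\item Passing to the a.e.\ limit along a common subsequence and taking $p$-superharmonic representatives gives $v_{k-1}\le v_k$ everywhere in $D$. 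You need this everywhere inequality for the increasing limit to be $p$-superharmonic.
\item The bound \eqref{eq:linear-wolff} is then applied to $v_k$ itself, so no constant is lost.
\end{itemize}

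The same approximants are also what justify your truncation energy bound. The function $v_j\wedge k$ is not a priori an admissible test function for the measure-data equation. Instead, one tests the equation for $v_{j,n}$ with $v_{j,n}\wedge k$, obtaining $\int_D|X(v_{j,n}\wedge k)|^p\,dx\le k\bigl(\nu(D)+\int_DP_{l,\alpha,\beta}(v_{j-1})\,dx\bigr)$. Weak lower semicontinuity in $S_0^{1,p}(D)$ then transfers this bound to $v_j\wedge k$.
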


\begin{proof} 
By \cite[Theorem~4.1]{PVTAMS}, we can choose a nonnegative measure data
solution $v_0$ of
\[
-\Delta_{\G,p}v_0=\nu\quad\text{in }D,
\quad v_0=0\quad\text{on }\partial D,
\]
which satisfies
\[
0\le v_0
\le C_*\W_{1,p}^{2\diam D}[\nu]
\le 2c_pC_*\W_{1,p}^{\infty}[\omega],
\]
and there exists a sequence of
nonnegative functions $v_{0,n}\in S^{1,p}_0(D)$ satisfying
\[
-\Delta_{\G,p}v_{0,n}
=\rho_n*\nu
\quad\text{in }D,
\]
such that, after passing to a subsequence, $v_{0,n}\to v_0$
almost everywhere in $D$. Here $\rho_n(x)=n^Q\rho(\delta_nx)$, where
$\delta_n$ is given by \eqref{eq:dilation} with $r=n$, and $\rho\in C_c^\infty(B_1(e))$ is nonnegative and satisfies
$\int_{\G}\rho\,dx=1$.
Here the data are extended by zero outside $D$. 
By the integrability condition in \eqref{integrability condition},
\[
0\le P_{l,\alpha,\beta}(v_0)
\le P_{l,\alpha,\beta}
   \bigl(2c_pC_*\W_{1,p}^{\infty}[\omega]\bigr)
\in L^1(D).
\]
Thus $P_{l,\alpha,\beta}(v_0)\chi_D\,dx+\nu$ is a finite
nonnegative measure. Applying \cite[Theorem~4.1]{PVTAMS} to
this measure, we obtain a nonnegative solution
$v_1$ of
\[
-\Delta_{\G,p}v_1=P_{l,\alpha,\beta}(v_0)+\nu
\quad\text{in }D,
\quad v_1=0\quad\text{on }\partial D.
\]
Here the data are extended by zero outside $D$.
Moreover, \cite[Theorem~4.1]{PVTAMS} shows that we can choose $v_1$ together with a sequence of
nonnegative functions $v_{1,n}\in S^{1,p}_0(D)$ satisfying
\[
-\Delta_{\G,p}v_{1,n}
=\rho_n*\bigl(P_{l,\alpha,\beta}(v_0)\chi_D\,dx+\nu\bigr)
\quad\text{in }D,
\]
such that, after passing to a subsequence, $v_{1,n}\to v_1$ almost everywhere in $D$. Define
\[
\varphi_n=\min\{v_{1,n}-v_{0,n},0\}\in S^{1,p}_0(D).
\]
Testing the two equations with $\varphi_n$ and subtracting,
we obtain
\[
\begin{aligned}
0
&\le \int_{\{v_{1,n}<v_{0,n}\}}
\bigl(|Xv_{1,n}|^{p-2}Xv_{1,n}
      -|Xv_{0,n}|^{p-2}Xv_{0,n}\bigr)
\cdot(Xv_{1,n}-Xv_{0,n})\,dx\\
&=\int_D
\bigl(|Xv_{1,n}|^{p-2}Xv_{1,n}
      -|Xv_{0,n}|^{p-2}Xv_{0,n}\bigr)
\cdot X\varphi_n\,dx\\
&=\int_D
\bigl[\rho_n*(P_{l,\alpha,\beta}(v_0)\chi_D)\bigr]
\varphi_n\,dx
\le0.
\end{aligned}
\]
Thus $\varphi_n=0 \,\operatorname{a.e.}$, which means that $v_{0,n}\le v_{1,n}$ almost everywhere in $D$. Passing to the limit along the common subsequence, we obtain $v_0\le v_1$ almost everywhere. Taking their $p$-superharmonic representatives, we obtain $v_0\le v_1$ everywhere in $D$.
Moreover, the Wolff potential estimate and
\eqref{eq:iteration-hyp} give
\begin{equation}
\begin{aligned}
v_1
&\le C_*\W_{1,p}^{2\diam D}
   [P_{l,\alpha,\beta}(v_0)\chi_D+\nu]\\
&\le c_pC_*\Bigl(
   \W_{1,p}^{2\diam D}
   [P_{l,\alpha,\beta}
      (2c_pC_*\W_{1,p}^{\infty}[\omega])\chi_D]
   +\W_{1,p}^{2\diam D}[\nu]\Bigr)\\
&\le 2c_pC_*\W_{1,p}^{\infty}[\omega].
\end{aligned}
\label{v1 estimate}
\end{equation}
Hence
\[
0\le v_0\le v_1
\le 2c_pC_*\W_{1,p}^{\infty}[\omega].
\]
Let $j\ge2$ and suppose that $v_0,\ldots,v_{j-1}$ have been
constructed with
\[
0\le v_0\le\cdots\le v_{j-1}
\le 2c_pC_*\W_{1,p}^{\infty}[\omega].
\]
The integrability assumption \eqref{integrability condition} gives
\[
0\le P_{l,\alpha,\beta}(v_{j-1})
\le P_{l,\alpha,\beta}
   \bigl(2c_pC_*\W_{1,p}^{\infty}[\omega]\bigr)
\in L^1(D).
\]
Thus $P_{l,\alpha,\beta}(v_{j-1})\chi_D\,dx+\nu$ is a finite
nonnegative measure. By \cite[Theorem~4.1]{PVTAMS}, the problem
\[
-\Delta_{\G,p}v_j=P_{l,\alpha,\beta}(v_{j-1})+\nu
\quad\text{in }D,
\quad v_j=0\quad\text{on }\partial D
\]
admits a nonnegative solution. Since $v_{j-2}\le v_{j-1}$,
\[
P_{l,\alpha,\beta}(v_{j-1})
-P_{l,\alpha,\beta}(v_{j-2})\ge0.
\]
For $j\ge2$, let $v_{j,n}\in S^{1,p}_0(D)$ be the solution of
\[
-\Delta_{\G,p}v_{j,n}
=\rho_n*\bigl(P_{l,\alpha,\beta}(v_{j-1})\chi_D\,dx+\nu\bigr)
\quad\text{in }D.
\] such that, after passing to a further subsequence,
$v_{j,n}\to v_j$ almost everywhere in $D$. The same calculation with
$\min\{v_{j,n}-v_{j-1,n},0\}$ gives
$v_{j-1,n}\le v_{j,n}\,\,\text{a.e. in }D$. Passing to the common subsequence yields
$
v_{j-1}\le v_j \,\,\text{a.e. in }D.
$
Since these functions are taken with their $p$-superharmonic representatives,
the inequality holds everywhere in $D$. Moreover, the same estimate as \eqref{v1 estimate} gives
\[
v_j
\le C_*\W_{1,p}^{2\diam D}
   [P_{l,\alpha,\beta}(v_{j-1})\chi_D+\nu]
\le 2c_pC_*\W_{1,p}^{\infty}[\omega].
\]
This completes the induction and yields
\[
0\le v_0\le v_1\le\cdots
\le 2c_pC_*\W_{1,p}^{\infty}[\omega].
\]
and
\begin{equation}
\label{P L1}
0\le P_{l,\alpha,\beta}(v_{j})
\le P_{l,\alpha,\beta}
   \bigl(2c_pC_*\W_{1,p}^{\infty}[\omega]\bigr)
\in L^1(D). \quad\forall j\geq1
\end{equation}
This proves the uniform bound. 

If $p=Q$ and $\beta=1$, similar as before we have 
  \[ 
  v_{j+1}\le C_*\W_{1,Q}^{2\diam D} 
  [H_l(\alpha v_j)\chi_D+\nu]. 
  \] 
  Since $c_Q=1$, the same calculation, with the assumption of the critical case of \eqref{eq:iteration-hyp} and \eqref{integrability condition} gives 
  \[ 
  v_{j+1}\le2C_*\W_{1,Q}^{2\diam D}[\omega]. 
  \] 
Moreover,
\[
0\le H_l(\alpha v_j)
\le H_l\bigl(2\alpha C_*
   \W_{1,Q}^{2\diam D}[\omega]\bigr)
\in L^1(D).
\]
The following argument applies to both cases.

Let $v=\lim_jv_j$.
The preceding bounds imply that $v$ is finite almost everywhere
in $D$. Since $\{v_j\}$ is an increasing sequence of nonnegative
$p$-superharmonic functions, $v$ is $p$-superharmonic.
Using \eqref{P L1} and the dominated convergence theorem we have
\[
P_{l,\alpha,\beta}(v_j)\to P_{l,\alpha,\beta}(v)
\quad\text{in }L^1(D).
\]
On the other hand, since $v_j$ is a nonnegative $p$-superharmonic function and $v$ is $p$-superharmonic, \cite[Theorem~3.3]{PVTAMS} gives
$-\Delta_{\G,p}v_j\rightharpoonup-\Delta_{\G,p}v$.
Passing to the limit in
$-\Delta_{\G,p}v_{j+1}=P_{l,\alpha,\beta}(v_j)\,dx+\nu$
gives the equation in \eqref{eq:boundediteration}. 

Fix $j\ge0$ and $k>0$. Testing the equation for $v_{j+1,n}$
with $v_{j+1,n}\wedge k\in S^{1,p}_0(D)$, we obtain
\[
\begin{aligned}
\int_D |X(v_{j+1,n}\wedge k)|^p\,dx
&=\int_D |Xv_{j+1,n}|^{p-2}Xv_{j+1,n}
   \cdot X(v_{j+1,n}\wedge k)\,dx\\
&=\int_D (v_{j+1,n}\wedge k)
   \bigl[\rho_n*
   (P_{l,\alpha,\beta}(v_j)\chi_D\,dx+\nu)\bigr]\,dx\\
&\le k\int_{\G}
   \bigl[\rho_n*
   (P_{l,\alpha,\beta}(v_j)\chi_D\,dx+\nu)\bigr]\,dx\\
&=k\left(
   \nu(D)+\int_D P_{l,\alpha,\beta}(v_j)\,dx
   \right),
\end{aligned}
\]
where we used the nonnegativity of the data and
$\int_{\G}\rho_n\,dx=1$.

Since $v_{j+1,n}\to v_{j+1}$ almost everywhere in $D$
and $0\le v_{j+1,n}\wedge k\le k$, dominated convergence gives
\[
v_{j+1,n}\wedge k\to v_{j+1}\wedge k
\quad\text{in }L^p(D).
\]
The preceding estimate and
$\|v_{j+1,n}\wedge k\|_{L^p(D)}^p\le k^p|D|$
give boundedness in $S^{1,p}_0(D)$. Hence, after passing
to a further subsequence,
\[
v_{j+1,n}\wedge k\rightharpoonup v_{j+1}\wedge k
\quad\text{in }S^{1,p}_0(D).
\]
By weak lower semicontinuity,
\[
\begin{aligned}
\int_D |X(v_{j+1}\wedge k)|^p\,dx
&\le\liminf_{n\to\infty}
   \int_D |X(v_{j+1,n}\wedge k)|^p\,dx\\
&\le k\left(
   \nu(D)+\int_D P_{l,\alpha,\beta}(v_j)\,dx
   \right)\\
&\le Ck,
\end{aligned}
\]
where
$C=\nu(D)+\int_D P_{l,\alpha,\beta}(v)\,dx<\infty$,
since $v_j\le v$ and $P_{l,\alpha,\beta}$ is nondecreasing.
In particular, $C$ is independent of $j$ and $k$. Together with $0\le v_{j+1}\wedge k\le k$, this shows that 
$\{v_{j+1}\wedge k\}_j$ is bounded in $S^{1,p}_0(D)$ 
for each fixed $k>0$. 
Since $S^{1,p}_0(D)$ is reflexive, after passing to a subsequence, there exists a function
$h\in S^{1,p}_0(D)$ such that
\[
v_{j+1}\wedge k\rightharpoonup h
\quad\text{in }S^{1,p}_0(D).
\]
On the other hand, dominated convergence gives
$v_{j+1}\wedge k\to v\wedge k$ in $L^p(D)$.
Thus $h=v\wedge k$, and hence
\[
v\wedge k\in S^{1,p}_0(D).
\]
Since $k>0$ was arbitrary, the zero boundary condition holds,
which completes the proof of \eqref{eq:boundediteration}.
\end{proof}

\section{Proof of the main theorems}
\begin{proof}[Proof of Theorem~\ref{thm:subcritical}]
 Choose a bounded open exhaustion $\Omega_1\Subset\Omega_2\Subset\cdots\Subset\Omega$
  with $K_\mu\subset\Omega_1$ and $\bigcup_j\Omega_j=\Omega$.  Proposition
 \ref{prop:subabsorb} gives \eqref{eq:subabsorb}; Lemma~\ref{lem:iteration}
 applied to $D=\Omega_j$ produces a solution $u_j$ of \eqref{eq:subproblem}
 in $\Omega_j$ with zero boundary values and with the uniform upper bound
 \begin{equation}
  u_j\le2c_pC_*\W_{1,p}^\infty[\omega].
 \label{eq:exhaustionbound}
 \end{equation}
 By an approximation and comparison argument analogous to those in
\cite{PhucVerbitsky2008,NGARMA2014,PVTAMS}, 
we may choose the solutions so that
$u_j\le u_{j+1}$ in $\Omega_j$. Hence $u=\lim_ju_j$ is finite almost everywhere by
  \eqref{eq:exhaustionbound}.The monotone limit $u$ is $p$-superharmonic.

  Let $\varphi\in C_c^\infty(\Omega)$.  For $j$ sufficiently large,
  $\supp\varphi\subset\Omega_j$.  The integrable majorant in
  \eqref{eq:subbound} and monotone convergence show that
 \[
  P_{l,\alpha,\beta}(u_j)\to P_{l,\alpha,\beta}(u)
 \quad\text{in }L^1(\supp\varphi).
 \]
  The weak continuity theorem of \cite[Theorem~3.1(ii)]{TW} gives weak convergence of the
  Riesz measures. Since
   $P_{l,\alpha,\beta}(u_j)dx+\mu\rightharpoonup
   P_{l,\alpha,\beta}(u)dx+\mu$, the limit satisfies
  \eqref{eq:subproblem}.
Set
\[
 U=2c_pC_*\W^\infty_{1,p}[\omega],
 \quad
 \mathcal M=\mu(\G)+
 \int_{\G}P_{l,\alpha,\beta}(U)\,dx<\infty.
\]
By the estimate in the proof of Lemma~\ref{lem:iteration},
we have
\[
\begin{aligned}
\int_{\Omega_j}|X(u_j\wedge k)|^p\,dx
&\le k\left(
\mu(\Omega_j)+
\int_{\Omega_j}P_{l,\alpha,\beta}(u_j)\,dx
\right)\\
&\le k\mathcal M,
\quad k>0.
\end{aligned}
\]

Fix $k>0$ and $\zeta\in C_c^\infty(\G)$.
Extend $u_j\wedge k$ by zero from $\Omega_j$ to $\Omega$,
and denote this extension by $w_j$.
Then $\zeta w_j\in S_0^{1,p}(\Omega)$, and
\[
 \|\zeta w_j\|_{L^p(\Omega)}^p
 \le k^p\|\zeta\|_{L^p(\G)}^p,
\]
\[
 \|X(\zeta w_j)\|_{L^p(\Omega)}^p
 \le 2^{p-1}\left(
 \|\zeta\|_\infty^p k\mathcal M
 +k^p\|X\zeta\|_{L^p(\G)}^p
 \right).
\]
Thus $\zeta w_j$ is bounded in $S_0^{1,p}(\Omega)$.
Since $w_j\to u\wedge k$ almost everywhere and
$|\zeta w_j|\le k|\zeta|$, dominated convergence gives
$\zeta w_j\to\zeta(u\wedge k)$ strongly in $L^p(\Omega)$. Since $S^{1,p}_0(\Omega)$ is reflexive, by reflexivity and the preceding boundedness, there exist a subsequence and a function $h\in S_0^{1,p}(\Omega)$ such that
\[
\zeta w_j\rightharpoonup h
\quad\text{in }S^{1,p}_0(\Omega).
\]
The strong $L^p(\Omega)$ convergence implies
$h=\zeta(u\wedge k)$. Hence
\[
\zeta(u\wedge k)\in S^{1,p}_0(\Omega).
\]
 The bound \eqref{eq:subbound} follows from \eqref{eq:exhaustionbound}.
  When $\Omega=\G$, the decay in \eqref{eq:sub-tail} implies that the right side
 of \eqref{eq:subbound} tends to zero along $d(x,x_0)\to\infty$; hence
 $\inf_\G u=0$.
\end{proof}
\begin{proof}[Proof of Theorem~\ref{thm:critical}]
Proposition~\ref{prop:criticalabsorb} gives
\eqref{eq:criticalabsorb} and
\[
H_l\!\left(2\alpha C_*
\W_{1,Q}^{2\diam\Omega}[\omega]\right)\in L^1(\Omega).
\]
Since $\mu\le\omega$, the critical version of
Lemma~\ref{lem:iteration}, applied on $\Omega$ with $\nu=\mu$,
gives a nonnegative solution $u$ of
\eqref{eq:criticalproblem} satisfying \eqref{eq:criticalbound}.
\end{proof}

\section{Further remarks}
\begin{remark}
 The condition $l\beta>Q(p-1)/(Q-p)$ in Theorem~\ref{thm:subcritical}
 makes $H_l(c[\W_{1,p}^\infty[\omega]]^\beta)$ integrable at infinity.
 It also implies $l\beta>p-1$, which makes the small parameter exponent
 in the absorption proof positive.  The critical theorem has no
 whole space tail; there $l>Q-1$ makes the exponent of $M$ in
 \eqref{eq:critical-absorption-factor} positive.
\end{remark}

\begin{remark}
An extension of Theorem~\ref{thm:critical} to $H_l(\alpha u^\beta)$ with
$\beta>1$ cannot retain a small total mass hypothesis alone while allowing
arbitrary measures. Indeed, already in the Euclidean critical case, an atom
of mass $m>0$ at $x_0$ forces the local lower bound
$u(x)\ge c m^{1/(Q-1)}\log(r_0/|x-x_0|)$ for a sufficiently small fixed
$r_0>0$, by the lower Wolff estimate in \cite{PVTAMS,TW}.
Consequently, $H_l(\alpha u^\beta)$ is not locally integrable near $x_0$.
Such an extension would require additional restrictions on the data.
\end{remark}

\section{Acknowledgement}
The first named author is supported by the Natural
Science Foundation of Tianjin, No. 22JCJQJC00130 and Fundamental Research Funds for the Central Universities. The second named author is supported by China Scholarship Council,No.202506200090.

\bibliographystyle{abbrvnat}
\bibliography{paper}

@book{AdamsHedberg1996,
  author = {Adams, D. R. and Hedberg, L. I.},
  title = {{Function Spaces and Potential Theory}},
  series = {Grundlehren der Mathematischen Wissenschaften},
  volume = {314},
  publisher = {Springer},
  address = {Berlin},
  year = {1996},
  doi = {10.1007/978-3-662-03282-4}
}

@article{BaloghManfrediTyson2003,
  author = {Balogh, Z. M. and Manfredi, J. J. and Tyson, J. T.},
  title = {Fundamental solution for the {$Q$}-Laplacian and sharp {Moser--Trudinger} inequality in {Carnot} groups},
  journal = {J. Funct. Anal.},
  volume = {204},
  number = {1},
  pages = {35--49},
  year = {2003},
  doi = {10.1016/S0022-1236(02)00169-6}
}

@article{BoccardoGallouet1989,
  author = {Boccardo, L. and Gallou{\"e}t, T.},
  title = {Nonlinear elliptic and parabolic equations involving measure data},
  journal = {J. Funct. Anal.},
  volume = {87},
  number = {1},
  pages = {149--169},
  year = {1989},
  doi = {10.1016/0022-1236(89)90005-0}
}

@article{BoccardoGallouetOrsina1996,
  author = {Boccardo, L. and Gallou{\"e}t, T. and Orsina, L.},
  title = {Existence and uniqueness of entropy solutions for nonlinear elliptic equations with measure data},
  journal = {Ann. Inst. H. Poincar{\'e} C Anal. Non Lin{\'e}aire},
  volume = {13},
  number = {5},
  pages = {539--551},
  year = {1996},
  doi = {10.1016/S0294-1449(16)30113-5}
}

@book{BLU,
  author = {Bonfiglioli, A. and Lanconelli, E. and Uguzzoni, F.},
  title = {{Stratified Lie Groups and Potential Theory for Their Sub-Laplacians}},
  series = {Springer Monographs in Mathematics},
  publisher = {Springer},
  address = {Berlin},
  year = {2007}
}

@article{Brezis-Merle,
  author = {Brezis, H. and Merle, F.},
  title = {Uniform estimates and blow-up behavior for solutions of {$-\Delta u=V(x)e^u$} in two dimensions},
  journal = {Comm. Partial Differential Equations},
  volume = {16},
  number = {8-9},
  pages = {1223--1253},
  year = {1991}
}

@article{Covering,
  author = {Conde-Alonso, J. M. and {Di Plinio}, F. and Parissis, I. and Vempati, M. N.},
  title = {A metric approach to sparse domination},
  journal = {Ann. Mat. Pura Appl. (4)},
  volume = {201},
  number = {4},
  pages = {1639--1675},
  year = {2022},
  url = {https://doi.org/10.1007/s10231-021-01174-7},
  doi = {10.1007/s10231-021-01174-7}
}

@article{Folland1975,
  author = {Folland, G. B.},
  title = {Subelliptic estimates and function spaces on nilpotent {Lie} groups},
  journal = {Ark. Mat.},
  volume = {13},
  pages = {161--207},
  year = {1975},
  doi = {10.1007/BF02386204}
}

@book{FollandStein1982,
  author = {Folland, G. B. and Stein, E. M.},
  title = {{Hardy Spaces on Homogeneous Groups}},
  series = {Mathematical Notes},
  volume = {28},
  publisher = {Princeton University Press},
  address = {Princeton, NJ},
  year = {1982}
}

@article{KTM-ASNSP,
  author = {Kilpel{\"a}inen, T. and Mal{\'y}, J.},
  title = {Degenerate elliptic equations with measure data and nonlinear potentials},
  journal = {Ann. Scuola Norm. Sup. Pisa Cl. Sci. (4)},
  volume = {19},
  number = {4},
  pages = {591--613},
  year = {1992},
  url = {http://www.numdam.org/item?id=ASNSP_1992_4_19_4_591_0}
}

@article{KM2,
  author = {Kilpel{\"a}inen, T. and Mal{\'y}, J.},
  title = {The {Wiener} test and potential estimates for quasilinear elliptic equations},
  journal = {Acta Math.},
  volume = {172},
  number = {1},
  pages = {137--161},
  year = {1994},
  url = {https://doi.org/10.1007/BF02392793},
  doi = {10.1007/BF02392793}
}

@article{KuusiMingione2014,
  author = {Kuusi, T. and Mingione, G.},
  title = {Guide to nonlinear potential estimates},
  journal = {Bull. Math. Sci.},
  volume = {4},
  number = {1},
  pages = {1--82},
  year = {2014},
  doi = {10.1007/s13373-013-0048-9}
}

@book{MarcusVeron2014,
  author = {Marcus, M. and V{\'e}ron, L.},
  title = {{Nonlinear Second Order Elliptic Equations Involving Measures}},
  series = {De Gruyter Series in Nonlinear Analysis and Applications},
  volume = {21},
  publisher = {De Gruyter},
  address = {Berlin},
  year = {2014},
  doi = {10.1515/9783110305319}
}

@article{NGARMA2014,
  author = {Nguyen, Q.-H. and V{\'e}ron, L.},
  title = {Quasilinear and {Hessian} type equations with exponential reaction and measure data},
  journal = {Arch. Ration. Mech. Anal.},
  volume = {214},
  number = {1},
  pages = {235--267},
  year = {2014},
  doi = {10.1007/s00205-014-0756-7}
}

@article{MaWang2026,
  author = {Ma, Shiguang and Wang, Zijian},
  title = {{$N$}-Laplacian and {$N/2$}-Hessian Type Equations with Exponential Reaction Terms and Measure Data},
  journal = {Potential Anal.},
  volume = {64},
  pages = {Article 27},
  year = {2026},
  doi = {10.1007/s11118-025-10266-5}
}

@article{PhucVerbitsky2008,
  author = {Phuc, N. C. and Verbitsky, I. E.},
  title = {Quasilinear and {Hessian} equations of {Lane--Emden} type},
  journal = {Ann. of Math. (2)},
  volume = {168},
  number = {3},
  pages = {859--914},
  year = {2008},
  doi = {10.4007/annals.2008.168.859}
}

@article{PVTAMS,
  author = {Phuc, N. C. and Verbitsky, I. E.},
  title = {Quasilinear equations with source terms on {Carnot} groups},
  journal = {Trans. Amer. Math. Soc.},
  volume = {365},
  number = {12},
  pages = {6569--6593},
  year = {2013},
  url = {https://doi.org/10.1090/S0002-9947-2013-05920-X},
  doi = {10.1090/S0002-9947-2013-05920-X}
}

@article{TW,
  author = {Trudinger, N. S. and Wang, X.-J.},
  title = {On the weak continuity of elliptic operators and applications to potential theory},
  journal = {Amer. J. Math.},
  volume = {124},
  number = {2},
  pages = {369--410},
  year = {2002},
  url = {http://muse.jhu.edu/journals/american_journal_of_mathematics/v124/124.2trudinger.pdf}
}

\end{document}